\theoremstyle{remark}
\newcommand{\bbE}{\mathbb{E}}
\newcommand{\bbP}{\mathbb{P}}
\newcommand{\bbR}{\mathbb{R}}
\newcommand{\bbN}{\mathbb{N}}
\newcommand{\bbind}{\mathbb{1}}
\newcommand{\calG}{\mathcal{G}}
\newcommand{\calB}{\mathcal{B}}
\newcommand{\scrI}{\mathscr{I}}
\newcommand{\iid}{\textsc{iid}\xspace}
\newcommand{\pa}{\textsc{pa}\xspace}
\renewcommand{\qq}{\textsc{qq}\xspace}
\newcommand{\ee}{\textsc{ee}\xspace} 
\newcommand{\lrbkt}[1]{\{#1\}}
\definecolor{maroon}{rgb}{0.5,0,0}
\newcommand{\fn}[1]{{\color{purple}#1}\xspace}
\newcommand{\sss}{\scriptscriptstyle}
\renewcommand{\fn}[1]{{#1}\xspace} %get rid of the color rendering for the editing purpose
\begin{document}
\begin{frontmatter}

\title{Consistent Estimation in General Sublinear Preferential Attachment Trees}
\runtitle{Consistent Estimation in the General PA Trees}

\begin{aug}
    \author{\fnms{Fengnan} \snm{Gao}\thanksref{t1}\ead[label=e1]{fngao@fudan.edu.cn}}
\thankstext{t1}{Research supported by the Netherlands Organization for Scientific Research and by the NNSF of China grant 1169010013.}
\affiliation{Fudan University}
\address{School of Data Science\\
    Fudan University\\
    Handan Road 220\\
Shanghai 200433, China}
\affiliation{Shanghai Center for Mathematical Sciences}
\address{Shanghai Center for Mathematical Sciences\\
    Handan Road 220\\
Shanghai 200433, China\\
\phantom{Email: \ }\printead*{e1}}
\author{\fnms{Aad} \snm{van der Vaart}\ead[label=e2]{avdvaart@math.leidenuniv.nl}\thanksref{t2}}
\thankstext{t2}{The research leading to these results has received funding from the European Research Council under ERC Grant Agreement 320637.}
\affiliation{Leiden University}
\address{Mathematical Institute \\
Leiden University \\
 Niels Bohrweg 1 \\
2333 CA Leiden, Netherlands\\
\phantom{Email: \ }\printead*{e2}}
\author{\fnms{Rui} \snm{Castro}\ead[label=e3]{rmcastro@tue.nl}}
\and
\author{\fnms{Remco} \snm{van der Hofstad}\ead[label=e4]{rhofstad@win.tue.nl}\thanksref{t3}}
\thankstext{t3}{Research supported by the Netherlands Organisation for Scientific Research (NWO) through VICI grant 639.033.806 and the Gravitation {\sc Networks} grant 024.002.003.}
\affiliation{Eindhoven University of Technology}
\address{
    Department of Mathematics\\
Eindhoven University of Technology\\
P.O. Box 513\\
5600 MB Eindhoven, Netherlands.\\
\phantom{Email:\ }\printead*{e3}
\phantom{Email:\ }\printead*{e4}
}
\end{aug}

\begin{abstract}
    We propose an empirical estimator of the preferential attachment function $f$ in the setting of general preferential attachment trees.  Using a supercritical continuous-time branching process framework, we prove the almost sure consistency of the proposed estimator.  We perform simulations to study the empirical properties of our estimators.
\end{abstract}
\begin{keyword}[class=MSC]
    \kwd[Primary ]{62G20}
\end{keyword}

\begin{keyword}
\kwd{Preferential Attachment Model, Consistency, Branching Process}
\end{keyword}
%\tableofcontents
\end{frontmatter}
\section{Introduction}
\label{SectionIntroduction}
After the conception of the \textit{scale-free} phenomenon in Barab\'asi's series of seminal work
(\cite{Barabasi99emergenceScaling,barabasi1999mean,barabasi2000scale}), scientists from numerous
disciplines have made discoveries that support the ubiquity of real-world scale-free networks. 
Together with the notion of \textit{small-world} networks, these discoveries mark the emergence of
network science at the turn of the century. \textit{Preferential attachment} models, which received their
modern conception in \cite{Barabasi99emergenceScaling}, have become popular, because they are one of
the few {generative models} that give rise to scale-free behavior.

Consider the following \textit{dynamical} network model.
The network starts at stage $n=2$ with two nodes $v_1$ and $v_2$ connected by a single edge. Next it evolves recursively by adding
nodes $v_3,v_4\ldots$, which each connect by a single edge to a single node of the existing network. The incoming
nodes choose the node to which they connect by a probabilistic mechanism.
Given the network with nodes $v_1, \dots,v_n$ having \textit{degrees} $d_1(n),\ldots, d_n(n)$,
the node $v_{n+1}$ connects to the existing node $v_i\in\{v_1,\ldots, v_n\}$ with probability proportional
to $f\bigl(d_i(n)\bigr)$, i.e., with probability
$$\frac{f\bigl(d_i(n)\bigr)}{ \sum_{j=1}^n f(d_j(n))}.$$
Here $f:\bbN_+\to\bbR_+$ is a given function, which we call the \textit{preferential attachment} (\pa) \textit{function},
and we refer to $f(d_i(n))$ as the \textit{preference} for node $v_i$ at stage $n$. 
The \pa function $f$ is typically assumed to be non-decreasing, so that nodes of higher degrees inspire more incoming connections.
This explains the name preferential attachment model.  
After the incoming node $v_{n+1}$ has made its choice, the network evolves to the stage of $n+1$ nodes and the  scheme repeats
with the set of existing nodes $\{v_1,\dots,v_{n+1}\}$ and the incoming node $v_{n+2}$.  
The recursive procedure may be repeated to reach any number of nodes.  

The empirical degree distribution $P_k(n)$ is defined as the proportion of nodes of degree $k$ at time $n$:
	\[ 
	P_k(n) = \frac{1}{n} \sum_{i \in \{1,\ldots,n\}} \mathbb{1}_{\{d_i(n) = k\}}.
	\]
In the case that $f$ is affine with $f(k) = k + \delta$, it is well known that 
$P_k(n) \rightarrow p_k$ almost surely, as $n \rightarrow \infty$,  for any fixed $k$, for a limit 
$p_k$ that satisfies \( p_k \sim c_\delta k^{-3-\delta}\) as $k\rightarrow \infty$, for some constant 
$c_\delta$  (where $\sim$ means that the quotient of the two sides tends to 1; see \cite{mori2002random,hofstadcomplexnet}).
Thus in this scenario  the limiting degree distribution follows a power law with exponent $3+\delta$, 
and the \textit{scale-free} phenomenon occurs.  In particular, for $f(k) = k$
the exact asymptotic degree distribution can be worked out to be
$p_k = 4/(k (k+1) (k+2))$, and the power-law exponent is $3$ (\cite{mori2002random,bollobas2001degree}).
%\citet{mori2002random} and \citet{2015arXiv150407328R} show that central limit theorems hold for the empirical degree distribution $P_k(n)$.  
%They apply martingale central limit theorems on carefully devised martingales to prove the asymptotic normality of $P_k(n)$ for any fixed $k$ as follows 
%\[\sqrt{n}\bigl( P_k(n) - p_k \bigr) \rightsquigarrow N(0, \sigma_k^2(\delta), \quad \text{as } n \rightarrow \infty,\]
%with explicit limiting variance $\sigma_k^2$ depending only on $\delta$ and $k$. 

For \pa functions $f$ that are not affine, there are roughly two possible cases: super- and sublinear.  
If $f$ grows faster than linearly (the super-linear case; more precisely if $\sum_k 1/f(k)<\infty$),
then  one node will function as a hub (or a \textit{star}) that connects to a large fraction of all nodes, 
if not virtually to all the nodes (\cite{oliveira2005connectivity,krapivsky2001organization}). 
In this case the continuous-time branching process that describes the \pa tree 
(see Section~\ref{sec:rooted-tree}) no longer has a Malthusian parameter, and the precise behavior of the \pa model
is not well understood.  

In this paper we focus on the case of sublinear \pa functions $f$. This includes the affine case, but
also strictly sublinear cases,  which lead to a variety of possible limiting degree distributions. 
Next to power laws these include for instance power-laws with exponential truncation, which arise when the \pa
function $f(k)$ becomes constant for large enough $k$. In general, the limiting degree
distribution may be much lighter tailed than in the affine case, corresponding to rarer occurrence of nodes of high degrees
(see \cite{rudas2007random}).
Such scenarios have been reported frequently in empirical work on real-world networks.

The paper is concerned with the problem of {\em statistical estimation} of a  \pa function from an observed realization of a network.
Most empirical work to date has focused on estimating a power law exponent, which is presumed
to describe the data. However, despite the seeming omni-presence of power laws, studying them is often problematic.
The numbers of nodes of high degrees in an observed network usually exhibit large variations and irregular behavior as a function
of the degree. This is to be expected, as they are rare, but makes it hard to determine whether there is a power law at all.  
In applications where an affine \pa model and ensuing power law are in doubt, it might be wise to 
estimate the \pa function $f$ using a more general model instead. In this paper we focus on the set
of general sublinear \pa functions. Besides being of interest in itself, fitting a general sublinear function
will also shed light on the fit of an affine function, as the general estimate of $f$ may or may not resemble an affine function.  
In other words, estimating $f$ in our nonparametric setting will also help validate the modelling of power laws in \pa networks. 

The main contribution of our work is to propose an empirical estimator of a general \pa function,
and to prove its consistency.  Statistical estimation in a \pa network is not a 
conventional statistical problem that admits a standard asymptotic analysis, in two ways. First, although
Markovian, the growth of the tree in the \pa scheme (i.e., each transition of the Markov process)
depends on the full history of the evolution, where the dependence on the history may be long range.  \cite{7024931} even show
that the influence of the so-called \textit{seed graph}---the initial configuration from which the
preferential attachment graph starts to grow---does not vanish as the network size tends to
infinity.  Second, for practical purposes it is desirable to base a statistical estimator on the current day 
snapshot of the network and not on the evolution of the tree, as this will often not be oberved. Thus
we observe only the last realization of the Markov chain.
Perhaps the most surprising part in our study is that indeed one can consistently estimate a \pa function
from this final shapshot.  
%A deeper thinking of the problem reveals that despite the model's long-range Markovian nature, the empirical degree distribution tends to a limiting shape, so that the connecting behavior of the incoming nodes asymptotically resembles the \iid setting.

The main mathematical tool of the paper is the theory of branching processes,
as introduced in this context by \cite{rudas2007random}.
Given nodes $v_1,v_2,\ldots,v_n$ with degrees $(d_i(n))_{i=1}^{n}$ and
preferences $\bigl(f(d_i(n))\bigr)_{i=1}^n$, the total preference $\sum_{i=1}^{n} f(d_i(n))$ is needed to
normalize the multinomial distribution on the nodes.  In the affine case that $f(k) = k + \delta$,
the total preference is deterministic and takes the form
\( \sum_{i=1}^{n} (d_i(n) + \delta) = n\delta + 2n. \)
This property allows to study the limiting degree distribution with simple recursions on the degree
evolution, and is also handy for the study of statistical estimators, as shown in \cite{gao2017spa}.  
However, in the case of general attachment functions, this ceases to hold and
the total preference is an involved random quantity that depends on the entire history of the
network evolution.  \cite{rudas2007random} overcome this difficulty by embedding
the \pa model in a  continuous-time branching-process framework, in which
each individual has children according to a pure birth process with birth rate $f(k)$,
if $k$ is the current number of chlidren. This embedding takes care of the normalization by the total preference. 
In the continuous-time dynamics the size of the network is random at any given time, 
but the process reduces to the \pa network at the stopping times where the tree reaches a given size.
\cite{rudas2007random} apply results  from the classical work of branching processes dating back to the 1970s and 1980s 
(see \cite{jagers1975branching,nerman1981convergence}) to prove that the empirical degree proportions $P_k(n)$ 
converge to the limits $p_k$ as in \eqref{eqn-pk-limit} almost surely, for any fixed $k$. 
We use similar arguments to derive the asymptotic consistency of our estimator.

This paper is organized as follows.  In Section~\ref{sec:main} we give the intuition behind the estimator and present our main result on its consistency.  Section~\ref{sec:rooted-tree} introduces the terminology of branching processes and gives a random tree model that is equivalent to the evolution of \pa networks.   We prove the main consistency result in Section~\ref{sec:network-ee-consistency}.  In the last section, we present
a simulation study on the performance of the proposed empirical estimators in different settings and discuss our observations.  Several simulation studies are carried out in order to uncover a more detailed picture of the properties of the empirical estimator, the most interesting one being that the estimator seems to be asymptotically normal with a $\sqrt{n}$ rate. 

\section[Empirical Estimator]{Construction of the Empirical Estimator and Main Result}
\sectionmark{Empirical Estimator}
\label{sec:main}
Suppose we have a \pa tree of $n$ nodes and $n$ is large enough so that the limiting degree distribution  $(p_k)_{k=1}^\infty$ is ``close'' to the empirical degree distribution $(P_k(n))_{k=1}^\infty$.  Suppose a new node comes in and needs to pick an existing node to attach to according to the \pa rule associated with the \pa function $f$.  Let $N_k(n)$ denote the number of nodes of degree $k$ (which is close to $n p_k$ in the limiting regime). Then, the probability of choosing an existing node of degree $k$ is
	\begin{equation*}
    	\frac{f(k) N_k(n)}{\sum_{j=1}^\infty f(j) N_j(n)} \approx \frac{f(k) p_k }{\sum_{j=1}^\infty f(j) p_j}.
	\end{equation*}
We are interested in the quantity $f(k)$ for each $k\ge 1$. However, $f$ is only identifiable up to scale and the denominator on the right hand side of the above display $ \sum_{j=1}^\infty f(j) p_j$ is a constant only depending on the \pa function $f$.
%Thus, $f(k)/(\sum_{j=1}^\infty f(j) p_j$  
If we multiply the above display by an extra factor $n/N_k(n) \approx 1/p_k$, then we get a rescaled version of $f(k)$
	\[
    	\frac{f(k)}{ \sum_{j=1}^\infty f(j) P_j(n)} \approx \frac{ f(k)}{  \sum_{j=1}^\infty f(j) p_j},
	\]
where $P_j(n) = N_j(n)/n$ is the proportion of nodes of degree $j$ among all the $n$ nodes.
Henceforth, we define $r_k$ as the rescaled version of $f(k)$ for each $k$ and summarize the above heuristic as  
	\[
	r_k := \frac{f(k)}{\sum_{j=1}^\infty f(j) p_j} \approx \text{Probability of choosing a node of degree } k \times \frac{n}{N_k(n)}.
	\]
{Here we note that while $k\mapsto f(k)$ is not uniquely defined (as multiplying by a non-zero constant gives rise to the same attachment rules), the quantity $r_k$ \emph{is} unique as it is normalized such that $\sum_k r_kp_k=1$.}
%\ch{Here we note that while $k\mapsto f(k)$ is not uniquely defined (as multiplying by a non-zero constant gives rise to the same attachment rules), the quantity $r_k$ \emph{is} unique as it is normalized such that $\sum_k r_kp_k=1$.}

We aim for an empirical estimator that mimics the above equation. This estimation, furthermore, should also work in the non-limiting regime.  Note that $n/N_k(n)$ is always readily available in any network, so it suffices to estimate the probability of the incoming node choosing an existing node of degree $k$. This probability can be estimated by counting the number of times that the incoming node chooses an existing node of degree $k$ during the evolution of the \pa network.  

{Let us now work the above heuristic out in a more formal way. Let $N_{\rightarrow k}(n)$ denote the number of times that a node is attached to a node of degree $k$. Further, denote the number of nodes of degree $k$ in the \pa network at time $n$ by $N_k(n)$.}
% \ch{Let us now work the above heuristic out in a more formal way. Let $N_{\rightarrow k}(n)$ denote the number of times that a node is attached to a node of degree $k$. Further, denote the number of nodes of degree $k$ in the \pa network at time $n$ by $N_k(n)$.}
The empirical estimator (\ee) $\hat{r}_k (n)$ is then defined as
\begin{equation}
    \hat{r}_k(n) = \frac{N_{\rightarrow k}(n)}{N_k(n)}.
    \label{eqn-def-ee-rkt}
\end{equation}
Let $N_{>k}(n)=\sum_{j>k} N_j(n)$ denote the number of nodes of degree strictly larger than $k$ at time $n$.  For the \pa networks considered here, we have the following \textit{crucial} observation:

\begin{lemma} For all $k, n\geq 1$,
    $N_{\rightarrow k}(n) = N_{>k}(n). $
    \label{lemma-ee-crucial-observation}
\end{lemma}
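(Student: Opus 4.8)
The plan is to prove the identity by tracking how node degrees evolve, exploiting the fact that in a \pa tree each attachment event increases the degree of exactly one node, and by exactly one. The heart of the argument is a bijection between the attachment events counted by $N_{\rightarrow k}(n)$ and the nodes counted by $N_{>k}(n)$. The key structural observation is that every node enters the network with degree $1$ and its degree is thereafter non-decreasing, jumping by exactly one whenever it is selected by an incoming node. Hence a node attains degree $>k$ if and only if, at some earlier stage, it was chosen while holding degree exactly $k$; and this promotion from $k$ to $k+1$ happens at most once, since once its degree exceeds $k$ it never returns to $k$.

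This yields a one-to-one correspondence: each node of current degree $>k$ is matched to the unique attachment event that pushed it from degree $k$ to $k+1$, and conversely each attachment to a degree-$k$ node promotes a distinct node into the set of nodes of degree $>k$. Counting both sides of this correspondence gives $N_{\rightarrow k}(n)=N_{>k}(n)$. To render this fully rigorous while avoiding delicate matching arguments, I would recast it as an induction on $n$. The base case is the initial seed network, where both recorded nodes have degree $1$, so for every $k\ge1$ no node yet has degree $>k$ and no attachment to a degree-$k$ node has occurred, giving $N_{\rightarrow k}=N_{>k}=0$.

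For the inductive step, suppose the identity holds at time $n$ and that the incoming node $v_{n+1}$ attaches to a node whose current degree is $m$. On one side, $N_{\rightarrow k}$ increases by $\mathbb{1}_{\{m=k\}}$. On the other side, the chosen node moves from degree $m$ to $m+1$, while $v_{n+1}$ arrives with degree $1$; consequently $N_{>k}$ increases by exactly $\mathbb{1}_{\{m=k\}}$, since the count of nodes of degree $>k$ grows precisely when the selected node crosses the threshold from $k$ to $k+1$, i.e. when $m=k$, and is unchanged when $m<k$ or $m>k$. As both sides change by the same increment, the identity propagates from $n$ to $n+1$.

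The step demanding the most care is the accounting of this increment. One must check that the incoming node never contributes to $N_{>k}$ for $k\ge1$, which uses the single-edge attachment rule (its degree is $1$) together with the hypothesis $k\ge1$, and that the promotion of the chosen node affects $N_{>k}$ in exactly the case $m=k$ and in no other. No genuinely deep obstacle arises; the content of the lemma is entirely combinatorial, and the main risk is a bookkeeping slip in the threshold-crossing analysis or in the treatment of the seed configuration.
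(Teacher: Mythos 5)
Your proof is correct and rests on exactly the same observation as the paper's: a node has degree $>k$ at time $n$ if and only if it was, at some unique earlier step, chosen while having degree exactly $k$, since degrees start at $1$ and increase by single unit jumps. The induction on $n$ with increment $\mathbb{1}_{\{m=k\}}$ on both sides is merely a careful formalization of the paper's direct bijection argument, so the two proofs are essentially identical.
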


\begin{proof}
    Observe that $N_{\rightarrow k}(n)$ counts the number of times that the incoming node chooses an existing node of degree $k$ to connect to up to time $n$.  Note that, if a node was chosen to be connected to as a node of degree $k$ at some point before time $n$, its degree at time $n$ is at least $k+1$.  
    On the other hand, we notice the node degree may only jump from $1$ to $2$, $2$ to $3$, \dots, $k$ to $k+1$, etc.  Therefore, if a node has degree strictly larger than $k$, it must have been chosen to be
    connected to as a node of degree $k$ at some time.  
    This gives the equality as in the statement of the lemma.
\end{proof}

In the light of the above observation, we note that \eqref{eqn-def-ee-rkt} is equivalent to
\begin{equation}
    \hat{r}_k(n) = \frac{N_{>k}(n)}{N_k(n)}.
    \label{eqn-def-eqv-ee}
\end{equation}
%Note that $\hat{r}_k$ estimates a rescaled version of $f(k)$
{%\color{blue} 
We give the main result of this paper in the following theorem, which applies to \pa functions satisfying the following
condition. Given a function $f: \bbN_+\to[0,\infty)$ define a function $\rho_f: (0,\infty)\to [0,\infty]$  by 
\begin{equation}
\label{EqDefRhof}
\rho_f(\lambda)=\sum_{l=1}^\infty\prod_{k=1}^l \frac{f(k)}{\lambda+f(k)}.
\end{equation}
Then the theorem assumes that the range of $\rho_f$ contains an open neighbourhbood of 1. 
In Section \ref{sec:rooted-tree} below we note that this condition is satisfied
by most \textit{sublinear} \pa functions $f$, whereas for \pa functions that increase faster than linear $\rho_f(\lambda)$ will typically
be infinite for every $\lambda>0$ and the condition fails.}
%The technical condition of the Malthusian parameter is needed to apply classical results from supercritical continuous-time branching processes.  The condition is mild and holds for a large range of \pa functions, {as we explain in more detail in Section \ref{sec:rooted-tree} below.}
%\ch{as we explain in more detail in Section \ref{sec:rooted-tree} below.}

\begin{theorem}
 \label{thm-consist-main}
%satisfies  that there exists a positive constant as the Malthusian parameter for the associated continuous-time random tree model specified in Section~\ref{subsec:continuous-tree}, 
{%\color{blue} 
If the range of the function $\rho_f$ attached to the true \pa function $f$ contains an open neighhourhood of 1,}
then the estimator $\hat{r}_k(n)$ defined in \eqref{eqn-def-eqv-ee} is consistent almost surely, i.e., for any $k$,
    \begin{equation}
        \hat{r}_k(n) \xrightarrow{\text{a.s.}} r_k, \quad \text{as } n\rightarrow \infty,
        \label{eqn-as-consistency}
    \end{equation}
where $\xrightarrow{\text{a.s.}}$ denotes convergence almost surely.
\end{theorem}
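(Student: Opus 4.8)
The plan is to reduce the estimator to a ratio of empirical degree proportions, pass to the almost sure limit through the continuous-time branching process (CTBP) embedding, and then verify an exact algebraic identity that matches the limit of the ratio to $r_k$. By Lemma~\ref{lemma-ee-crucial-observation} the estimator \eqref{eqn-def-eqv-ee} equals $\hat r_k(n)=N_{>k}(n)/N_k(n)=\bigl(N_{>k}(n)/n\bigr)\big/\bigl(N_k(n)/n\bigr)$, so it suffices to control the two proportions $N_k(n)/n$ and $N_{>k}(n)/n$ separately. I would start from the CTBP description of Section~\ref{sec:rooted-tree}, in which an individual of degree $k$ begets offspring at rate $f(k)$ and the \pa tree on $n$ nodes is recovered by stopping the branching process at the random time $\tau_n$ at which the population reaches size $n$. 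Under the hypothesis that the range of $\rho_f$ contains a neighbourhood of $1$, the equation $\rho_f(\lambda)=1$ has a (unique, by monotonicity of $\rho_f$) solution $\lambda^\ast\in(0,\infty)$, the Malthusian parameter, the process is supercritical Malthusian, and the convergence results of \cite{rudas2007random,nerman1981convergence} apply.

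First I would record the limit of the single-degree proportions. Applying the general convergence theorem for branching processes counted by random characteristics to the characteristic $\mathbb{1}\{D(s)=k\}$ (where $D(s)$ denotes the degree at age $s$ of a generic individual, with $D(0)=1$ and transitions $k\mapsto k+1$ at rate $f(k)$), and evaluating along the stopping times $\tau_n$, gives $N_k(n)/n\to p_k$ almost surely with
\begin{equation*}
p_k=\lambda^\ast\,\tilde g_k,\qquad \tilde g_k:=\int_0^\infty e^{-\lambda^\ast s}\,\Pr\bigl(D(s)=k\bigr)\,ds ,
\end{equation*}
which is exactly the limit established in \cite{rudas2007random}. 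Since $N_{>k}(n)/n=1-\sum_{j=1}^k N_j(n)/n$ is a finite linear combination of convergent proportions, it follows that $N_{>k}(n)/n\to p_{>k}:=\sum_{j>k}p_j$ almost surely, with no tail condition needed. As $f(k)>0$ for every $k$, each $\tilde g_k>0$ and hence $p_k>0$; therefore the ratio converges and $\hat r_k(n)\to p_{>k}/p_k$ almost surely. It remains to show that $p_{>k}/p_k=r_k$.

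The heart of the argument is this exact identity, which I would obtain from the Kolmogorov forward equations for $g_k(s):=\Pr(D(s)=k)$, namely $g_1'=-f(1)g_1$ and $g_k'=f(k-1)g_{k-1}-f(k)g_k$ for $k\ge2$, with $g_1(0)=1$ and $g_k(0)=0$. Multiplying by $e^{-\lambda^\ast s}$ and integrating by parts yields the recursion $(\lambda^\ast+f(k))\tilde g_k=f(k-1)\tilde g_{k-1}$ for $k\ge2$ together with $(\lambda^\ast+f(1))\tilde g_1=1$. Writing $\lambda^\ast\tilde g_j=f(j-1)\tilde g_{j-1}-f(j)\tilde g_j$ and summing over $j>k$ telescopes to $p_{>k}=\lambda^\ast\sum_{j>k}\tilde g_j=f(k)\tilde g_k-\lim_{J\to\infty}f(J)\tilde g_J$. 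The remaining tail term vanishes because, by the defining property of the Malthusian parameter,
\begin{equation*}
\sum_{j\ge1}f(j)\tilde g_j=\int_0^\infty e^{-\lambda^\ast s}\,\mathbb{E}\,f\bigl(D(s)\bigr)\,ds=\int_0^\infty e^{-\lambda^\ast s}\,\mu(ds)=\rho_f(\lambda^\ast)=1,
\end{equation*}
where $\mu$ is the mean offspring intensity; finiteness of this sum forces $f(J)\tilde g_J\to0$. Hence $p_{>k}=f(k)\tilde g_k$, so that $p_{>k}/p_k=f(k)/\lambda^\ast$. The same computation gives $\sum_j f(j)p_j=\lambda^\ast\sum_j f(j)\tilde g_j=\lambda^\ast$, whence $r_k=f(k)/\sum_j f(j)p_j=f(k)/\lambda^\ast=p_{>k}/p_k$, completing the proof.

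I expect the main obstacle to lie not in this computation but in the transfer of the almost sure branching-process limits to the discrete snapshot at $\tau_n$, together with the verification of the regularity conditions (the Malthusian/supercriticality hypotheses and the $x\log x$ moment condition) under which the characteristic convergence theorem applies; this is precisely where the assumption on the range of $\rho_f$ enters, and where the analysis of $\rho_f$ promised for Section~\ref{sec:rooted-tree} is required. By contrast, the algebraic identification $p_{>k}/p_k=r_k$, although it is the conceptual crux that explains why the simple ratio $N_{>k}(n)/N_k(n)$ is a legitimate estimator, is a short and self-contained computation once the limit $p_k=\lambda^\ast\tilde g_k$ and the normalization $\rho_f(\lambda^\ast)=1$ are in hand.
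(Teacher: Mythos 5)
Your argument is, in substance, the paper's own: the continuous-time branching process embedding with birth rate $f(\xi(t)+1)$, Nerman's convergence theory for processes counted by characteristics evaluated at the random times where the population reaches size $n$, and the algebraic identification $p_{>k}/p_k=r_k$. Your two local variations are both sound. Where the paper applies the ratio assertion \eqref{eqn-fraction-as} of Proposition~\ref{prop-fraction-as} directly to the bounded characteristics $\bbind_{\{\xi(t)+1>k\}}$ and $\bbind_{\{\xi(t)+1=k\}}$ and computes both Laplace transforms in closed form via $\{\xi(t)>k-1\}=\{T_1+\cdots+T_k<t\}$ (equations \eqref{eqn-xi-bigger-k} and \eqref{eqn-xi-k}), you obtain $N_{>k}(n)/n$ by complementation from finitely many single-degree proportions; and where the paper proves $p_{>k}=f(k)p_k/\lambda^*$ by induction upward from $k=1$ (Lemma~\ref{lemma-limit-degree-dist-equality}), you telescope from above and kill the tail term $f(J)\tilde g_J$ using $\sum_j f(j)\tilde g_j=\rho_f(\lambda^*)=1$; that normalization, in the form $\sum_j f(j)p_j=\lambda^*$, is exactly \eqref{eqn-lambda-star-sum}. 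These are interchangeable ways of doing the same computation, and your Kolmogorov-forward-equation recursion $(\lambda^*+f(k))\tilde g_k=f(k-1)\tilde g_{k-1}$ reproduces \eqref{eqn-pk-limit} correctly.

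The one genuine gap is in the sentence asserting that ``the \pa tree on $n$ nodes is recovered by stopping the branching process at $\tau_n$.'' It is not, quite: in the i.i.d.\ branching framework every individual, \emph{including the root}, reproduces at rate $f(\xi_x(t)+1)$, so the root is treated as if it had a parent and its preference is computed from its degree plus one. The stopped tree is therefore a slightly adapted \pa model, and the true model---which starts from two degree-one nodes $v_1,v_2$ joined by an edge---is not a single branching process of this type. You cannot repair this by giving the root birth rate $f(\xi(t))$ instead, because the reproduction processes would then fail to be identically distributed and Nerman's theorem no longer applies off the shelf. The paper's fix is to run two \emph{independent} branching processes rooted at $v_1$ and $v_2$ (each root's phantom parent accounting for the initial edge), whose union evolves exactly as the original model; but then the ratio statement \eqref{eqn-fraction-as} does not apply to the summed counts $Z_{t,1}^\phi+Z_{t,2}^\phi$, and one must instead invoke the exponential convergence \eqref{eqn-exponential convergence} for each process separately and cancel the factor $Y_{1,\infty}+Y_{2,\infty}$. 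This is precisely where strict positivity of $Y_\infty$---hence the $x\log x$ moment condition, verified via Lemma~1 of \cite{rudas2007random} in Section~\ref{sec:rooted-tree}---is genuinely needed, rather than being the routine regularity check your closing paragraph suggests. Everything else in your proposal survives this correction unchanged.
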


The proof of the theorem is deferred to Section~\ref{sec:network-ee-consistency}. 

The estimator  \eqref{eqn-def-eqv-ee} can be computed from the network as observed at time $n$, without needing access
to the evolution of the network up to this time. This is important when modelling a real-world network as a \pa network, 
because in real-world applications it is often difficult, 
expensive, or even impossible to recover the evolution history of a network. 

%This seems counterintuitive, given the Markovian nature of the model.   
%A simple explanation of this apparent contradiction would be the limiting behavior of the network model. Indeed, the degree sequence stabilizes as the network grows, and as the limiting degree distribution is deterministic, the influence of the history ceases.  As a result, if $X_i$ denotes the degree of the node to which the $i$-th edge is attached, then for large $N$  the sequence $(X_i)_{i=N}^n$ is close to \iid.
%AAD: I don't see how this would offer an explanation of not needing history, as X_1, X_2,  is the evolution.

The form of the empirical estimator $ \hat{r}_k(n)$ in \eqref{eqn-def-eqv-ee} sparks some
philosophical considerations.  Consider the degree of a node as a measure of its wealth, i.e., the
more neighbors, the richer.  Suppose that a node of degree $k$ asks how likely it is
to get \textit{richer}, i.e., to receive an extra connection.  This is equivalent to asking for an
estimate of $f(k)$.  The estimator \eqref{eqn-def-eqv-ee} counts the
number of the richer nodes $N_{>k}(n)$ and the number of nodes at the same level $N_k(n)$, and 
returns the quotient of these numbers as an 
estimate of  $f(k)$ (up to normalization).  If you live in the world of these nodes and wonder about  your
chance of moving up, then you might naturally come up with the aforementioned ratio.  The higher the
number of people above you relative to the number of people sharing your rank, the better 
your chance to move up.

\section[Branching Process]{Borrowing strength from branching processes}
\sectionmark{Branching Process}
\label{sec:rooted-tree}
In this section we introduce the terminology needed to reformulate the 
\pa model in the language of branching processes, similar to \cite{rudas2007random}.  As the \pa function is no longer affine, the conventional (and somewhat elementary) techniques, e.g., the martingale method as in \cite[Chapter 8]{hofstadcomplexnet}, do not work anymore.  
However, the supercritical branching processes 
observed at the random stopping times where their size is fixed,
designed first by \cite{rudas2007random}, are (almost) equivalent to the \pa trees, which in turn enables us to study the \pa trees with the
well-established results of general branching processes.

\subsection{Rooted ordered tree}
The \pa network is a \textit{rooted ordered tree}, which can be described as an evolving genealogical
tree, where the nodes are individuals and the edges are parent-child relations.  The usual notation
for the nodes are $\emptyset$ for the root of the tree and $l$-tuples $(i_1,\dots,i_l)$ of positive natural numbers $i_j\in\bbN_{+}$
for the other nodes ($l\in\bbN_{+}$). The children of the root are labeled $(1), (2),\ldots$, and in general 
$x = (i_1,\dots,i_l)$ denotes the $i_l$-th child of the $i_{l-1}$-th child of $\cdots$ of the $i_1$-th child
of the root.  Thus the set of all possible individuals is 
\begin{equation*}
    \mathscr{I}= \lrbkt{\emptyset} \cup \left( \bigcup_{l=1}^\infty \mathscr{I}_l\right),
\qquad \mathscr{I}_l = \bbN_{+}^l.
\end{equation*}
For $x = (x_1, \dots, x_k)$ and $y = (y_1, \dots, y_l)$ the notation $xy$ is shorthand for 
the concatenation $(x_1, \dots, x_k, y_1, \dots, y_l)$, and, in particular, $xl=(x_1, \dots, x_k, l)$.

Since the edges of the tree can be inferred from the labels of the nodes $(i_1,\ldots, i_l)$,
a rooted ordered tree can be identified with a subset $G \subset \scrI$.
(Not every subset corresponds to a rooted ordered tree, as the labels need to 
satisfy the  \textit{compatibility conditions} that  for every $(x_1, \dots, x_k) \in G$ we have
$(x_1,\dots, x_{k-1})\in G$ (\textit{parent must be in tree}) as well as $(x_1, \dots, x_k-1) \in G$ if
$x_k \ge 2$ (\textit{older sibling must be in tree}).)  The set of all finite rooted ordered trees is denoted by
$\calG$.  In this terminology and notation the \textit{degree} of node $x \in G$ that is not the root is
the number of its children in $G$ plus $1$ (for its parent), given by 
\begin{equation}
    \deg (x,G) =|\lrbkt{l \in \bbN_{+}\mid xl\in G }|+1.
    \label{eqn-ee-deg-def}
\end{equation}

\subsection{Branching process}
\label{subsec:ee-branching-process}
The evolution in time of the genealogical tree is described through stochastic processes $\bigl( \xi_x(t) \bigr)_{t\ge 0}$, one for each 
individual $x\in \mathscr{I}$. The random point process $\xi_x$ on $[0,\infty)$, given the ages of the parent $x$ at the births of its children, describes the node $x$ giving births to its children. 
The birth time $\sigma_x$ of individual $x$ in calendar time is
defined recursively, by setting $\sigma_ \emptyset=0$ (the root is born at time zero) and
\[
    \sigma_y = \sigma_x + \inf\{u\ge 0\colon \xi_x(u)\ge l \},\qquad \text{ if }y = xl.
\]
Thus the $l$-th child of $x$ is born at the birth time of $x$ plus the time of the $l$-th event in $\xi_x$.

It is assumed that the birth processes $\xi_x$ for different $x\in \mathscr{I}$ are \iid. 
{This is the defining property of a continuous-time branching process.}
%\ch{This is the defining property of a continuous-time branching process.}{}
Formally, we may define all processes $\xi_x$ on the product probability space 
\begin{equation*}
    (\Omega, \calB, P) =  \prod_{x \in \scrI} (\Omega_x, \calB_x, P_x),
\end{equation*}
where every $(\Omega_x, \calB_x, P_x) $ is an independent copy of 
a single probability space $(\Omega_0, \calB_0, P_0)$ and every $\xi_x$ is defined as
$\xi_x(\omega)=\xi(\omega_x)$ if $\omega=(\omega_x)_{x\in\scrI}\in\Omega$, for
$\xi$ a given point process defined on $(\Omega_0, \calB_0, P_0)$. 
We identify the point process $\xi$ with the process $\xi(t)$ giving the number of points in $[0,t]$, for $t\ge 0$,
and write $\mu(t) = \bbE[\xi(t)]$ for its intensity measure, which is often called the {\em reproduction function} in this
context.

Besides the reproductive process $\xi_x$ we also attach a \textit{random characteristic} $\phi_x$ to every
individual $x\in \mathscr{I}$. This is also a stochastic process $\bigl(\phi_x(t)\bigr)_{t\ge 0}$, which we take non-negative,
measurable and separable. For simplicity, we define $\phi_x(t) =0 $ for  $t <0$.  We then proceed to define
\begin{equation*}
    Z_t^{\phi} = \sum_{x \in \scrI\colon \sigma_x\le t} \phi_x(t- \sigma_x).
\end{equation*}
If $\phi_x(t)$ is viewed as a characteristic of individual $x$ when $x$ has age $t$,
then the variable $\phi_x(t-\sigma_x)$ is the characteristic of individual $x$ at calendar time $t$, and
$Z_t^\phi$ is the sum of all such characteristics over the individuals that are alive at time $t$ 
(i.e., {individuals $x$ for which} $\sigma_x\le t$).
%(i.e., \ch{individuals $x$ for which}{} $\sigma_x\le t$).

The characteristics $\phi_x$ are assumed independent and identically distributed
for different individuals $x$, as the reproductive processes. Formally this may be achieved by defining $\phi_x(\omega)=\phi(\omega_x)$
if $\omega=(\omega_x)_{x\in\scrI}\in\Omega$, for a given stochastic process $\phi$ on $(\Omega_0, \calB_0, P_0)$. 
This allows the two processes $\xi_x$ and $\phi_x$ attached to a given individual to be dependent.
In fact, we shall be interested in the choices, for a given natural number $k$,
\begin{equation}
\label{EqCharacteristics}
\begin{aligned}
\phi(t)&\equiv 1,\\
\phi(t)&=\bbind_{\{\xi(t)= k - 1\}},\\
\phi(t)&=\bbind_{\{\xi(t)> k- 1 \}}.
\end{aligned}
\end{equation}
For the first characteristic the variable $Z_t^{\phi}=Z_t^1$ is equal to 
the total number of individuals born up to time $t$, for the second it equals the total number of those individuals
with exactly $k - 1$ (and hence of degree $k$), and for the second more than $k - 1$ children at time $t$ (hence of degree $>k$). 
As we will soon see, we are interested in the degree distribution of the network and the degree of a node is defined to be its number of children ($\xi(t)$) plus one, this is why $k - 1$ appears instead of $k$.  

We consider \textit{supercritical}, \textit{Malthusian} branching processes for which the following three conditions hold:
\begin{enumerate}
    \item $\mu$ does not concentrate on any lattice $\lrbkt{0, h , 2h,\dots}$ for $h >0$.  
    \item There exists a number $\lambda^*>0 $ such that
        \begin{equation}
            \int_0^{\infty} {\mathrm e}^{-\lambda^* t} \mu(dt) = 1.
  \label{eqn:network-ee-malthusian}
        \end{equation}
    \item {%\color{blue} 
        The second moment of ${\mathrm e}^{-\lambda^* t}\, \mu(dt)$ is finite, i.e.,
        \begin{equation}
            \int_0^\infty t^2 \mathrm{e}^{-\lambda^* t}\, \mu(dt) < \infty.
            \label{eqn:network-ee-lambda-finite}
        \end{equation}}
\end{enumerate}
Condition \eqref{eqn:network-ee-malthusian} is the \textit{Malthusian} assumption, and $\lambda^*$
is called the Malthusian parameter.
%Condition \eqref{eqn:network-ee-lambda-finite} is a form of the \textit{supercritical} condition.

The following combines Theorems~5.4 and~6.3 of \cite{nerman1981convergence}
(also compare Theorem A from \cite{rudas2007random}).

{%\color{blue} 
\begin{proposition}
\label{prop-fraction-as}
Consider a supercritical, Malthusian branching process with Malthusian parameter $\lambda^*$ as in \eqref{eqn:network-ee-malthusian} and 
satisfying \eqref{eqn:network-ee-lambda-finite}, 
and two associated bounded characteristics $\phi$ and $\psi$.  
Then there exists a random variable $Y_\infty$ depending on $\xi$ only
such that almost surely on the event that the total population size $Z_t^1$ tends to infinity, as $t\ra\infty$,
\begin{equation}
\mathrm{e}^{-\lambda^*t} Z_t^\phi\ra Y_\infty    \frac{\int_0^\infty {\mathrm e}^{-\lambda^*t}\bbE [\phi(t)]\,dt}
{\int_0^\infty t {\mathrm e}^{-\lambda^*t}\,\mu(dt)},    
 \label{eqn-exponential convergence}
  \end{equation}
Furthermore, if $\bbE [{}_{\lambda^*}\xi(\infty)\log_+{}_{\lambda^*}\xi(\infty)]<\infty$, for
${}_{\lambda}\xi(t)=\int_0^t\mathrm{e}^{-\lambda s}\,\xi(ds)$, then $Y_\infty$ can be taken strictly positive, and 
almost surely on the event that the total population size $Z_t^1$ tends to infinity,
\begin{equation}
        \frac{ Z_t^\phi}{ Z_t^\psi} \xrightarrow{\text{a.s.}} \frac{\int_0^\infty \mathrm{e}^{-\lambda^*t}\bbE [\phi(t)]\,dt}
{\int_0^\infty \mathrm{e}^{-\lambda^*t}\bbE [\psi(t)]\,dt},
        \label{eqn-fraction-as}
\end{equation}
as $t\rightarrow\infty$. The convergence \eqref{eqn-fraction-as} is true, more generally, if
$\int_0^\infty \mathrm{e}^{-\lambda t} \mu( dt) < \infty$, for some number $\lambda<\lambda^*$.
\end{proposition}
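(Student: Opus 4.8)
The plan is to run the general (Crump--Mode--Jagers) branching-process machinery of \cite{nerman1981convergence}, organised around a single \emph{intrinsic martingale} that is measurable with respect to the reproduction processes alone; both the normalised limit \eqref{eqn-exponential convergence} and the ratio \eqref{eqn-fraction-as} will then be read off from it, the key point being that one and the same random variable $Y_\infty$ governs every characteristic simultaneously. First I would fix the filtration $(\calF_t)_{t\ge0}$ generated by the reproduction processes of all individuals born up to calendar time $t$, write $x^-=(x_1,\dots,x_{k-1})$ for the parent of $x=(x_1,\dots,x_k)$, and introduce the reproduction (intrinsic) martingale
\[
W_t=\sum_{x\in\scrI}\mathrm{e}^{-\lambda^*\sigma_x}\,\bbind_{\{\sigma_{x^-}\le t<\sigma_x\}},
\]
the sum running over the \emph{coming generation} $\partial_t$ of individuals not yet born at $t$ whose parent is already alive. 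The Malthusian identity \eqref{eqn:network-ee-malthusian} is exactly what yields $\bbE[W_0]=\int_0^\infty\mathrm{e}^{-\lambda^*s}\mu(ds)=1$ and makes $(W_t,\calF_t)$ a nonnegative martingale, so it converges almost surely to a limit that is $\xi$-measurable by construction; this is the candidate $Y_\infty$.

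Second I would pin down the deterministic constant via expectation asymptotics. Conditioning on the first generation gives the renewal-type decomposition $Z_t^\phi=\phi_\emptyset(t)+\sum_{l:\,\sigma_{(l)}\le t}\widetilde Z^{\,\phi}_{\,t-\sigma_{(l)}}$ with the $\widetilde Z^{\,\phi}$ independent copies rooted at the children. Taking expectations, the map $t\mapsto\bbE[Z_t^\phi]$ solves $\bbE[Z_t^\phi]=\bbE[\phi(t)]+\int_0^t\bbE[Z_{t-s}^\phi]\,\mu(ds)$; after the Malthusian tilt $\mathrm{e}^{-\lambda^* s}\mu(ds)$, which is a probability measure, this becomes a proper renewal equation, and Condition~1 (non-lattice) lets the key renewal theorem apply to give
\[
\mathrm{e}^{-\lambda^* t}\,\bbE[Z_t^\phi]\;\longrightarrow\;\frac{\int_0^\infty\mathrm{e}^{-\lambda^* t}\bbE[\phi(t)]\,dt}{\int_0^\infty t\,\mathrm{e}^{-\lambda^* t}\,\mu(dt)},
\]
the denominator $\beta=\int_0^\infty t\,\mathrm{e}^{-\lambda^* t}\mu(dt)$ being finite by \eqref{eqn:network-ee-lambda-finite}.

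Third --- and this is the crux --- I would upgrade this mean statement to the almost-sure statement \eqref{eqn-exponential convergence}. The device is to cut the population at a backward lag $a$: every individual alive at $t$ either lived already at $t-a$ or descends from a unique member of the coming generation $\partial_{t-a}$, giving
\[
\mathrm{e}^{-\lambda^* t}Z_t^\phi=\mathrm{e}^{-\lambda^* t}\!\!\sum_{y:\,\sigma_y\le t-a}\!\!\phi_y(t-\sigma_y)\;+\sum_{x\in\partial_{t-a}}\mathrm{e}^{-\lambda^*\sigma_x}\Big(\mathrm{e}^{-\lambda^*(t-\sigma_x)}Z^{\phi,x}_{t-\sigma_x}\Big).
\]
For bounded $\phi$ the first term has finitely many summands and is killed by $\mathrm{e}^{-\lambda^* t}$, while the weights $\sum_{x\in\partial_{t-a}}\mathrm{e}^{-\lambda^*\sigma_x}=W_{t-a}\to Y_\infty$ and, by the second step applied to the i.i.d.\ subtrees, each bracket concentrates near $c_\phi:=\beta^{-1}\int_0^\infty\mathrm{e}^{-\lambda^* t}\bbE[\phi(t)]\,dt$ once $a$ is large. \textbf{The main obstacle is exactly here}: one must show the recent-birth fluctuations in the second sum are negligible almost surely, \emph{uniformly} enough to send $t\to\infty$ and then $a\to\infty$ and interchange the two limits. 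This is where boundedness of $\phi$ together with \eqref{eqn:network-ee-lambda-finite} supplies the requisite variance/domination control; for the \emph{ratio} one needs less, and the weaker hypothesis $\int_0^\infty\mathrm{e}^{-\lambda t}\mu(dt)<\infty$ for some $\lambda<\lambda^*$ already bounds this remainder, because the common factor $Y_\infty$ cancels before any second moment is invoked.

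Finally I would establish strict positivity of $Y_\infty$ on survival, which is the Kesten--Stigum-type statement for these processes: under $\bbE[{}_{\lambda^*}\xi(\infty)\log_+{}_{\lambda^*}\xi(\infty)]<\infty$ the martingale $W_t$ is uniformly integrable, hence converges in $L^1$ with $\bbE[Y_\infty]=1$, and a standard zero--one argument identifies $\{Y_\infty>0\}$ with $\{Z_t^1\to\infty\}$ up to a null set. With $Y_\infty>0$ in hand, applying \eqref{eqn-exponential convergence} to both $\phi$ and $\psi$ and dividing, the factors $Y_\infty$ and $\beta$ cancel and leave precisely \eqref{eqn-fraction-as}; the cancellation is what makes the ratio robust to the relaxed integrability condition stated at the end.
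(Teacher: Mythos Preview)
Your proposal is correct and rests on the same machinery as the paper, namely Nerman's results in \cite{nerman1981convergence}; the underlying approach is identical. The difference is one of presentation and economy: the paper does not reprove any of Nerman's theorems but simply \emph{cites} Theorem~5.4 for \eqref{eqn-exponential convergence}, Proposition~1.1 and~(3.10) for the strict positivity of $Y_\infty$ under the $x\log x$ condition, and Theorem~6.3 for the ratio \eqref{eqn-fraction-as} under the weaker Laplace-transform condition, then verifies that \eqref{eqn:network-ee-lambda-finite} and boundedness of $\phi,\psi$ deliver Nerman's Conditions~5.1 and~5.2. Your sketch instead reconstructs the content of those theorems (the intrinsic martingale, the renewal equation for means, the coming-generation decomposition, the Kesten--Stigum step), which is more informative but also considerably longer. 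One small caveat: your explanation of why the relaxed hypothesis suffices for \eqref{eqn-fraction-as} (``$Y_\infty$ cancels before any second moment is invoked'') is heuristically right but a bit loose; the precise mechanism is Nerman's Theorem~6.3, which proves the ratio limit directly without passing through \eqref{eqn-exponential convergence} or requiring $Y_\infty>0$, and the paper simply invokes that theorem rather than arguing around it.
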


\begin{proof}
Assertion \eqref{eqn-exponential convergence} follows by Theorem 5.4 of \cite{nerman1981convergence}.
Note that Condition 5.1 of the latter theorem is satisfied because of
\eqref{eqn:network-ee-lambda-finite} (see (5.7) of \cite{nerman1981convergence}), while Condition 5.2
follows by the assumed boundedness of $\phi$ and $\psi$. 
By Proposition 1.1 and (3.10) in the same reference, the convergence of the integral 
$\bbE {}_{\lambda^*}[\xi(\infty)\log_+{}_{\lambda^*}\xi(\infty)]$ implies that 
the variable $Y_\infty$ is strictly positive almost surely on the event that the total population size $Z_t^1$ tends to infinity. Then 
\eqref{eqn-fraction-as} follows by applying \eqref{eqn-exponential convergence} to both $e^{-\lambda^*t}Z_t^\phi$ and $e^{-\lambda^*t}Z_t^\psi$, and 
taking the quotient. The last assertion of the proposition follows from Theorem 6.3 of \cite{nerman1981convergence}.
\end{proof}
}

%\subsection{The discrete random tree model}
%\label{subsec-discrete-model}
%For a \pa function $f: \bbN_{+} \rightarrow \bbR_{+}$, we define the \textit{associated discrete random tree model} for the \pa function $f$ to be a Markov chain $\Upsilon_d$ on state space $\calG$ with $\Upsilon_d(0) = \emptyset$.  The transition probability is defined to be
%\begin{equation*}
 %   P(\Upsilon_d(n+1) = G\cup \lrbkt{xk}) = \frac{f(\deg(x,G))}{\sum_{y\in G}f(\deg(y,G))},
%\end{equation*}
%for any $x \in G$ and $k = \deg(x,G)$.  Therefore every time a new node appears, it is appended to one of the existing nodes, say $x$, with probability proportional to $f(\deg(x,G))$.   We see that the associated discrete random tree model of the \pa function $f$ is equivalent to the \pa network model.
% \begin{lemma}
    %% to be made more precise
   % The associated discrete random tree model of the preferential attachment function $f$ is equivalent to the \pa model. % \textcolor{red}{Clearer explanation here}
%     \label{lemma-discrete-equivalant}
% \end{lemma}
% \begin{proof}
%     If we can establish the 1-1 correspondence between the \pa graph and a random tree, then we proceed to verify the statement with mathematical induction.
% 
%     \textcolor{red}{1. Establish 1-1 correspondence between a random tree and a \pa graph; 2. Show the law of each possible outcome (under 1-1 map) is the same with mathematical induction.}
% \end{proof}

\subsection{The continuous random tree model}
\label{subsec:continuous-tree}
To connect back to the \pa model, given a \pa function $f$, we now define the process $\xi$ as
a pure birth process with birth rate equal to $f(\xi(t)+1)$, i.e., the continuous-time Markov process with state space
being the non-negative integers and the only possible transitions given by
\begin{equation}
    P(\xi(t+dt) = k+1 \mid \xi(t) = k) = f(k+1)\, dt + o(dt).
    \label{eqn-birth-process}
\end{equation}
The genealogical tree is then also a Markov process on the state space $\calG$. The initial state
of the process is the root $\lrbkt{\emptyset}$ of the tree, and the jumps of this process 
 correspond to an individual $x\in G$ giving
birth to a child, which is then incorporated in the tree as an additional node. In the preceding notation this means that the
process can jump from a state $G$ to a state of the form $G \cup \lrbkt{xk}$, 
where necessarily $x \in G$ and $k = \deg(x,G)$ is the
number of children that $x$ already has in the tree plus 1.  This  jump 
is made with rate $f(\deg(x,G))$, since according to \eqref{eqn-birth-process} with $\xi=\xi_x$
the individual $x$ gives birth to a new child with rate $f(k)$ if $x$ already has $k-1$
children. The description in terms of rates means more concretely
that given the current state $G$, the Markov process can jump to the finitely many 
possible states $G\cup\lrbkt{xk}$, $x \in G$ and $k = \deg(x,G)$, and it chooses between these states with probabilities 
\begin{equation*}
    \frac{f(\deg(x,G))}{\sum_{y \in G} f(\deg(y,G))},\qquad x\in G.
\end{equation*}
Furthermore, the waiting time in state $G$ to the next jump is an exponential variable with intensity equal to
the \textit{total preference} 
\begin{equation*}
    F(G) = \sum_{x\in G} f(\deg(x,G)).
\end{equation*}
The continuous-time scale of the process is not essential to us, but it is convenient for our calculations. 
We shall use that when $t\rightarrow\infty$ the continuous-time tree visits the same states (trees) as the \pa model,
and taking limits as $t\ra\infty$ is equivalent to taking limits in the \pa model as the number of nodes
increases to infinity almost surely.

In order to apply Proposition~\ref{prop-fraction-as} in our setting we need to verify its conditions
on the birth process $\xi$ and the reproduction function $\mu(t)=\bbE[\xi(t)]$, and determine the Malthusian parameter.
The events of the pure birth process \eqref{eqn-birth-process} can be written as $T_1<T_1+T_2<T_1+T_2+T_3<\cdots$,
where $(T_k)_{k=1}^\infty$ are independent random variables exponentially distributed with rates $(f(k))_{k=1}^\infty$.
The total number of births $\xi(t)=\int \bbind_{(0,t]}(u)\,\xi(du)$ at time $t$ is equal to $\sum_{l=1}^\infty \bbind_{(0,t]}(T_1+\cdots+T_l)$, 
which clearly tends to infinity almost surely as $t\ra\infty$. Furthermore, we have 
$\int \mathrm{e}^{-\lambda u}\,\xi(du)=\sum_{l=1}^\infty \mathrm{e}^{-u(T_1+\cdots+T_l)}$, and hence
	\begin{equation}
	\label{laplace-transform}
	 \int_0^\infty \mathrm{e}^{-\lambda u}\,\mu(du)=\bbE \Bigl[\sum_{l=1}^\infty \mathrm{e}^{-\lambda(T_1+\cdots+T_l)}\Bigr]
	=\sum_{l=1}^\infty \prod_{k=1}^l \frac{f(k)}{\lambda+f(k)}=\rho_f(\lambda),
	\end{equation}
{%\color{blue} 
for $\rho_f$ defined in \eqref{EqDefRhof}. The Malthusian parameter $\lambda^*$ 
is defined as the argument where the function $\rho_f$ in the display equals one. The terms of the series definining
this function are nonnegative, and strictly decreasing
and convex in $\lambda$. Hence if $\rho_f$ is finite for some $\lambda>0$, then it is finite and continuous on an
interval $(\underline\lambda, \infty)$ and tends to zero as $\lambda\ra\infty$, by the dominated convergence theorem.
If $\lim_{\lambda\downarrow\underline\lambda} \rho_f(\lambda)>1$, then $\lambda^*$ exists and is interior
to the interval $(\underline\lambda, \infty)$. It is shown in Lemma~1 on page~200 of \cite{rudas2007random} that 
in this case the associated birth process $\xi$ satisfies  $\bbE [{}_{\lambda^*}\xi(\infty)]^2<\infty$ as soon as
$f(k)\ra\infty$. 

Thus all conditions of Proposition~\ref{prop-fraction-as} are satisfied if
$\lim_{\lambda\downarrow\underline\lambda} \rho_f(\lambda)>1$, and this is equivalent to 1 being an inner
point of the range of $\rho_f$.

\begin{example}
If $f$ is strongly sublinear in the sense that $f(k)\le (k+\delta)^{\beta}$, for some $0<\b<1$ and every $k\in\bbN_+$, then 
$\rho_f(\lambda)<\infty$ for every $\lambda>0$, and hence $\underline\lambda=0$. By the monotone convergence 
theorem $\rho_f(\lambda)\uparrow\rho_f(0)=\infty$, as $\lambda\downarrow 0$. Hence
all conditions of Proposition~\ref{prop-fraction-as} are satisfied.

To see that  $\rho_f$ is finite on $(0,\infty)$, note that $\rho_f$ is increasing in $f$, so that
it suffices to prove the claim in the special case that $f(k)= (k+\delta)^{\beta}$.
Since $\log(1+x)>x/2$, for $x\in [0,1]$, we have for $K$ such that  $\min_{k\ge K}f(k)\ge \lambda$,
$$\prod_{k=K}^l \frac{f(k)}{\lambda+f(k)}=\exp\Bigl[-\sum_{k=K}^l\log \Bigl(1+\frac{\lambda}{f(k)}\Bigr)\Bigr]
\le \mathrm{e} ^{-(1/2)\sum_{k=K}^l\l/f(k)}.$$
For $f(k)=(k+\delta)^\beta$, we can choose $K=\lambda^{1/\beta}$ and then find that the right side
is bounded above by a multiple of $\mathrm{e}^{-\lambda(l+\d)^{1-\beta}/(2-2\beta)}$. This sums finitely with
respect to $l$, and hence $\rho_f(\lambda)<\infty$.
\end{example}

\begin{example}
In the exact linear case $f(k)=k+\delta$ for some $\d\ge 0$, the function $\rho_f$ can be computed exactly as 
%$\rho_f(\lambda)=\delta/(\lambda-1)$
% in Rudas et al., $f(k) = k + \beta$ starting from $k = 0$, ie, $f(0) = \beta$ is the preference on degree $0$ in their context, which equals to $f(1) = 1 + \delta$ that we put on degree $1$ in our context.  Hence $\beta = 1 + \delta$.
% Also the corresponding Malthusian parameter for $f(k) = k + \delta$ is $2 + \delta$ instead of $ 1 + \beta$. 
%\fn{$\rho_f(\lambda)=(1+ \delta)/(\lambda-1)$}
\fn{$\rho_f(\lambda)=(1+ \delta)/(\lambda-1)$}
(cf., Section~4.2 of \cite{rudas2007random}).
Then $\underline\lambda=1$ and $\rho_f(\lambda)$ increases to infinity as $\lambda\downarrow\underline\lambda$,
and all conditions of Proposition~\ref{prop-fraction-as} are satisfied.

If $f$ is sublinear in the sense that $f(k)\le k+\delta$, for some $\delta\ge 0$ and every $k\in\bbN_+$, then 
$\rho_f(\lambda)<\infty$ for every $\lambda>1$, but the behaviour of the function
$\rho_f$ near $\underline\lambda$, which will be contained in $[0,1]$, appears to depend on the fine properties of $f$. 
\end{example}

When applied to the root node $x=\emptyset$, the right side of formula \eqref{eqn-ee-deg-def} gives the degree plus 1
and not the degree, as the root $v_1$ in the \pa model in Section~\ref{SectionIntroduction}
does not have a parent. The preceding branching model then can be viewed as the description of an alternative
\pa model in which the root is also considered to have a parent (say $v_0$), 
whom, however, will never give birth to other children than the initial one. 
This approach is followed by \cite{rudas2007random}. Alternatively, to match the continuous-time branching process exactly to
the original description of the \pa model, the birth process $\xi_\emptyset$ of the root should be defined by 
\eqref{eqn-birth-process} but with $f(k)dt+o(dt)$ instead of $f(k+1)dt+o(dt)$ on the right side. Intuitively,
this replacement makes little difference for our asymptotics. However, as the birth processes will then not be identically distributed,
a direct reference to \cite{nerman1981convergence} will be impossible. Below we solve this by running two separate,
independent branching processes from each of the two starting nodes $v_1$ and $v_2$.}

\section[Consistency]{Consistency of the Empirical Estimators}
\sectionmark{Consistency of the EE}
\label{sec:network-ee-consistency}
For completeness, we present a result without proof from  \cite{rudas2007random} giving the limiting degree distribution
of the \pa model.

\begin{proposition}
    \label{prop-ee-limit-deg-dist}
{%\color{blue} 
If the range of the function $\rho_f$ attached to the true \pa function $f$ contains an open neighhourhood of 1,}
%    Consider a \pa function $f$ such that there exists a positive constant $\lambda^*$ as the Malthusian parameter for the associated continuous-time random tree model.  
then as $n\rightarrow \infty$, the empirical degree distribution  $P_k(n)$ converges almost surely for any $k$ to some limit $p_k$, i.e.,
    \begin{equation}
        P_k(n) \xrightarrow{\text{a.s.}} p_k = \frac{\lambda^*}{\lambda^* + f(k)}\prod_{j=1}^{k-1} \frac{f(j)}{\lambda^* + f(j)}, \quad \forall k\in \bbN_+,
        \label{eqn-pk-limit}
    \end{equation}
where the empty product is defined to be $1$, so that $p_1 = \lambda^*/\bigl(\lambda^* + f(1)\bigr)$. 
\end{proposition}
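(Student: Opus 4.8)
The plan is to read off $p_k$ as the almost sure limit of a ratio $Z_t^\phi/Z_t^1$ of two branching-process counts and then evaluate that limit with Proposition~\ref{prop-fraction-as}. Recall from Section~\ref{subsec:ee-branching-process} that with the characteristic $\psi\equiv1$ the count $Z_t^1$ is the total population size, while with $\phi(t)=\bbind_{\{\xi(t)=k-1\}}$ the count $Z_t^\phi$ is the number of individuals having exactly $k-1$ children at calendar time $t$, i.e.\ the number of nodes of degree $k$. Observing the continuous-time tree at the stopping times where it reaches a prescribed size, these are exactly $n$ and $N_k(n)$, so that $P_k(n)=Z_t^\phi/Z_t^1$ along the relevant times. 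Since the states visited by the continuous-time tree coincide with those of the \pa model, and $Z_t^1\ra\infty$ almost surely (the pure birth process is non-explosive in the sublinear regime, so the population grows without bound), proving $Z_t^\phi/Z_t^1\ra p_k$ as $t\ra\infty$ yields the claimed $P_k(n)\ra p_k$ as $n\ra\infty$.

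To invoke \eqref{eqn-fraction-as} I would take $\psi\equiv1$ and $\phi(t)=\bbind_{\{\xi(t)=k-1\}}$; both are bounded, and the remaining hypotheses of Proposition~\ref{prop-fraction-as} (Malthusian, supercritical, the second-moment and $x\log x$ conditions) have already been verified in Section~\ref{subsec:continuous-tree} under the assumption that $1$ is an interior point of the range of $\rho_f$. The seed discrepancy flagged at the end of Section~\ref{subsec:continuous-tree}---that the root's birth process uses $f(k)$ rather than $f(k+1)$---would be handled by running two independent copies of the branching process from the nodes $v_1$ and $v_2$ and adding the corresponding counts; since the ratio limits of the two copies agree, the combined ratio converges to the same value.

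Proposition~\ref{prop-fraction-as} then gives
\begin{equation*}
\frac{Z_t^\phi}{Z_t^1}\xrightarrow{\text{a.s.}}
\frac{\int_0^\infty \mathrm{e}^{-\lambda^*t}\,P(\xi(t)=k-1)\,dt}{\int_0^\infty \mathrm{e}^{-\lambda^*t}\,dt},
\end{equation*}
and it remains to evaluate the two integrals. The denominator equals $1/\lambda^*$. For the numerator I would use the representation of $\xi$ in terms of the independent exponential inter-birth times $T_k\sim\mathrm{Exp}(f(k))$ from \eqref{laplace-transform}: writing $S_l=T_1+\cdots+T_l$ and $S_0=0$, the event $\{\xi(t)=k-1\}$ equals $\{S_{k-1}\le t<S_k\}$. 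By Tonelli,
\begin{equation*}
\int_0^\infty \mathrm{e}^{-\lambda^*t}\,P(S_{k-1}\le t<S_k)\,dt
=\bbE\int_{S_{k-1}}^{S_k}\mathrm{e}^{-\lambda^*t}\,dt
=\frac{1}{\lambda^*}\,\bbE\bigl[\mathrm{e}^{-\lambda^*S_{k-1}}-\mathrm{e}^{-\lambda^*S_k}\bigr].
\end{equation*}
Independence of the $T_k$ gives $\bbE[\mathrm{e}^{-\lambda^*S_l}]=\prod_{j=1}^l f(j)/(\lambda^*+f(j))$, so the bracket factors as $\prod_{j=1}^{k-1}f(j)/(\lambda^*+f(j))\cdot\lambda^*/(\lambda^*+f(k))$. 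Dividing the numerator by the denominator $1/\lambda^*$ removes the leading factor and leaves exactly the formula \eqref{eqn-pk-limit} for $p_k$.

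The main obstacle is not the integral computation, which is a routine Laplace-transform calculation, but the careful bookkeeping of the embedding: justifying that observing the continuous-time tree at the size-$n$ stopping times reproduces the \pa model, transferring the continuous-time almost sure convergence to convergence along the integer index $n$, and dealing cleanly with the seed so that a direct appeal to \cite{nerman1981convergence} through Proposition~\ref{prop-fraction-as} remains legitimate despite the non-identically-distributed root process.
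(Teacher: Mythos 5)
Your proposal is correct and takes essentially the same approach as the paper: the paper states Proposition~\ref{prop-ee-limit-deg-dist} without proof, citing \cite{rudas2007random}, but the identical machinery appears in its proof of Theorem~\ref{thm-consist-main} --- the branching-process embedding, Proposition~\ref{prop-fraction-as} with bounded indicator characteristics, the Laplace-transform evaluation (your numerator computation is exactly \eqref{eqn-xi-k} at $\lambda=\lambda^*$), and the two-seed construction from the end of Section~\ref{subsec:continuous-tree}. The only (harmless) variation is that you merge the two seeded copies via a convex-combination argument on the two ratios, whereas the paper applies the first assertion of Proposition~\ref{prop-fraction-as} to each copy and cancels the strictly positive limits $Y_{i,\infty}$; both steps are valid, and yours even sidesteps the need for positivity of $Y_\infty$ since with $\psi\equiv 1$ the denominators are trivially positive.
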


%Suppose the birth process is defined as in \eqref{eqn-birth-process}. Then the intervals between successive jumps of $t\mapsto \xi[0,t]$ are independent exponentially distributed random variables of parameters $(f(k))_{k=1}^\infty$ respectively. We recall the Malthusian equation and that $\lambda^*$ the solution to the equation (in $\lambda$) 
%\begin{align*}
%    \int_{0}^\infty \mathrm{e}^{-\lambda t} d\mu(t) = \sum_{k=1}^\infty \left[ \prod_{i=1}^k \frac{f(i)}{ \lambda + f(i)}\right] = 1.  
%\end{align*}

It follow from  \eqref{eqn-pk-limit}  that $p_{k+1} = p_k  f(k) / (\lambda^{*} + f({k+1}))$.  Furthermore, 
\begin{equation}
    \label{eqn-lambda-star-sum}
\begin{aligned}
    \sum_{k=1}^\infty f(k) p_k & = \sum_{k=1}^\infty \frac{\lambda^* f(k)}{ \lambda^* + f(k)} \prod_{i=1}^{k-1} \frac{f(i) }{ \lambda^* + f(i)} \\
    & = \sum_{k=1}^\infty \lambda^* \prod_{i=1}^{k} \frac{f(i)}{ \lambda^* + f(i)} =\lambda^*\rho_f(\lambda^*)= \lambda^*.
\end{aligned}
\end{equation}

\begin{proof}[Proof of Theorem~\ref{thm-consist-main}]
{%\color{blue}
We embed the evolution of the \pa model within the continuous time branching process framework described
in Section~\ref{sec:rooted-tree}. As explained at the end of the latter section a straightforward embedding
gives a slightly different \pa model, in which the degree of the root node is counted one higher than in the
original \pa model. We first give the proof for the adapted \pa model, and next
explain how this proof can be adapted to treat the original \pa model.

The degree $\deg(x,t)$ of node $x$ in the continuous time branching tree at time $t$ relates
to its associated reproductive process as $\deg(x,t) = \xi_x(t-\sigma_x) + 1$. 
Therefore the total number of nodes of degree strictly larger than $k$ present in the tree at time $t$, and the total number
of nodes of degree $k$ are given by 
$$\sum_{x\in \scrI} \bbind_{\{ \xi_x(t - \sigma_x) + 1 > k\} }, \qquad\text { and } \qquad
\sum_{x\in \scrI}  \bbind_{\{ \xi_x(t - \sigma_x) + 1 = k\} }.$$
These are the processes $Z_t^\phi$ and $Z_t^\psi$ corresponding to the characteristics
$\phi(t) =  \bbind_{\lrbkt{\xi(t) + 1 > k}}$ and $\psi(t) = \bbind_{\lrbkt{ \xi(t) + 1 = k}}$, respectively. 
It follows that 
\begin{equation}
\frac{|\lrbkt{ x\colon  \deg(x,t)>k}|}{ |\lrbkt{ x\colon  \deg(x,t)=k}|} 
=\frac{Z_t^\phi}{Z_t^\psi}
\ra\frac{\int_{0}^\infty {\mathrm e}^{-\lambda^* t} \bbP(\xi(t) + 1 >k)\, dt}
{\int_{0}^\infty {\mathrm e}^{-\lambda^* t} \bbP(\xi(t) + 1 =k)\, dt},
 \label{eqn-limit-phi-psi}
\end{equation}
almost surely, as $t\ra\infty$, by the second assertion of Proposition~\ref{prop-fraction-as}. When evaluated at the (random) time $t$ such that
the total population size $Z_t^1$ is equal to $n$, the left side of the display gives the empirical estimator \eqref{eqn-def-eqv-ee}.
Since these random times tend to infinity almost surely as $n\ra\infty$ , we conclude that the empirical estimator
converges almost surely to the right side of the preceding display.
To complete the proof we must identify this right side as $r_k = f(k)/\sum_j f(j) p_j$.

From $\bbP\bigl(\xi(t)>k - 1 \bigr)=\bbP(T_1+\cdots+T_{k}<t)=\int_0^t h_k(u)\,du$, for $h_k$ the density
of $T_1+\cdots+T_{k}$, we have by Fubini's theorem (or partial integration),
\begin{equation}
    \begin{aligned}
 \lambda\int_0^\infty {\mathrm e}^{-\lambda t}\bbP\bigl(\xi(t)>k - 1\bigr)\,dt 
&=\int_0^\infty \int_u^\infty \lambda {\mathrm e}^{-\lambda t}\,dt\,h_k(u)\, du\\
&=\bbE [{\mathrm e}^{-\lambda (T_1+\cdots+T_{k})}]= \prod_{j=1}^{k} \frac{f(j)}{\lambda+f(j)}.
       \end{aligned}
    \label{eqn-xi-bigger-k}
\end{equation}
Furthermore, writing $\bbP\bigl(\xi(t)=k- 1\bigr)$ as the difference of the preceding with $k-2$ and $k-1$,
we obtain
\begin{equation} 
\lambda\int_0^\infty {\mathrm e}^{-\lambda t}\bbP\bigl(\xi(t)=k-1 \bigr)\,dt 
=\frac{\lambda}{\lambda+f(k)}\prod_{j=1}^{k-1} \frac{f(j)}{\lambda+f(j)}.
    \label{eqn-xi-k}
\end{equation}
At $\lambda=\lambda^*$ the right hand side of \eqref{eqn-xi-k} is the same as the limiting proportion $p_k$ 
in \eqref{eqn-pk-limit}, while the right hand side of \eqref{eqn-xi-bigger-k} can be seen to be $p_{>k} := \sum_{j >k} p_j$,
with the help of Fubini's theorem.  Therefore, their quotient is $r_k$ by the succeeding Lemma \ref{lemma-limit-degree-dist-equality}. 

This concludes the proof for the adapted \pa model in which the root has degree 1 higher than in the
original \pa model. The original model starts with two connected nodes $v_1$ and $v_2$, which initially
both have degree 1. We start two independent branching processes of the type described
in Section~\ref{sec:rooted-tree} and attach these to the nodes $v_1$ and $v_2$ as their roots, thus forming a single tree. This union of
the two processes evolves as the original \pa model. If $Z_{t,i}^\phi$ and $Z_{t,i}^\psi$, for $i\in \{1,2\}$, are
the processes counting the number of nodes of degree strictly larger than $k$ and equal to $k$ at time $t$ in the two branching
processes, then $Z_{t,1}^\phi+Z_{t,2}^\phi$ and $Z_{t,1}^\psi+Z_{t,2}^\psi$  are the number of such nodes
in the union of the two branching processes, and 
\begin{align*}
\frac{|\lrbkt{ x\colon  \deg(x,t)>k}|}{ |\lrbkt{ x\colon  \deg(x,t)=k}|} 
&=\frac{\mathrm{e}^{-\lambda^*t}(Z_{t,1}^\phi+Z_{t,2}^\phi)}{\mathrm{e}^{-\lambda^*t}(Z_{t,1}^\psi+Z_{t,2}^\psi)}\\
&\ra \frac{(Y_{1,\infty}+Y_{2,\infty}) \int_{0}^\infty {\mathrm e}^{-\lambda^* t} \bbP(\xi(t) + 1 >k)\, dt}
{ (Y_{1,\infty}+Y_{2,\infty}) \int_{0}^\infty {\mathrm e}^{-\lambda^* t} \bbP(\xi(t) + 1 =k)\, dt},
\end{align*}
almost surely, as $t\ra\infty$, by the first assertion of Proposition~\ref{prop-fraction-as}. 
The almost sure positivity of the variables $Y_{i,\infty}$ allows to cancel them from 
the right side, whence the limit is the same as before.}
\end{proof}

\begin{lemma}
    Suppose  $(p_k)_{k=1}^\infty$ is the limiting degree distribution specified in \eqref{eqn-pk-limit} for the \pa function $ f $.  Then, for all $k\geq 1$,
    \begin{equation}
        \label{eqn-lemma-limit-degree-dist-equality}
        \frac{f(k)}{\sum_{j=1}^\infty p_j f(j)} = \frac{  p_{>k} }{ p_k}.
    \end{equation}
   % holds for $k\in \mathbb{N}_+$.  %The above display is the same with $q_k = p_{>k}$ for $k \in \bbN_+$.
    \label{lemma-limit-degree-dist-equality}
\end{lemma}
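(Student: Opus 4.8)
The plan is to reduce the claimed identity \eqref{eqn-lemma-limit-degree-dist-equality} to the single \emph{balance relation}
$$\lambda^* p_{>k} = f(k)\, p_k.$$
Once this is in hand, the lemma follows at once: by the normalization $\sum_{j} f(j) p_j = \lambda^*$ established in \eqref{eqn-lambda-star-sum}, the left-hand side of \eqref{eqn-lemma-limit-degree-dist-equality} equals $f(k)/\lambda^*$, while dividing the balance relation by $\lambda^* p_k$ shows that the right-hand side $p_{>k}/p_k$ equals $f(k)/\lambda^*$ as well.

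To prove the balance relation I would start from the one-step recursion $p_{k+1} = p_k\, f(k)/(\lambda^* + f(k+1))$ recorded just below \eqref{eqn-pk-limit}. Rewriting it as $(\lambda^* + f(m))\, p_m = f(m-1)\, p_{m-1}$ for every $m \ge 2$, and rearranging, gives $\lambda^* p_m = f(m-1)\, p_{m-1} - f(m)\, p_m$. The right-hand side is the difference of consecutive terms of the sequence $m \mapsto f(m)\, p_m$, so summing over $m$ from $k+1$ to a finite $N$ telescopes to
$$\lambda^* \sum_{m=k+1}^{N} p_m = f(k)\, p_k - f(N)\, p_N.$$

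Letting $N \to \infty$ then yields $\lambda^* p_{>k} = f(k)\, p_k$, provided the boundary term $f(N)\, p_N$ vanishes in the limit. This is the only point requiring a separate argument, and it is settled immediately by \eqref{eqn-lambda-star-sum}: since the series $\sum_m f(m)\, p_m$ converges (to the finite value $\lambda^*$), its general term tends to zero, so $f(N)\, p_N \to 0$. With this, the telescoping is complete and the balance relation follows.

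I expect no serious obstacle here: the identity is essentially a discrete flow-balance statement for the stationary degree distribution, and the telescoping is the natural device to exploit it. The only things to watch are the interchange of limit and infinite summation in forming $p_{>k}$, which the finite-$N$ telescoping sidesteps cleanly, and the vanishing of the boundary term $f(N)\, p_N$, which comes for free from the finiteness of $\sum_m f(m)\, p_m$ already recorded in \eqref{eqn-lambda-star-sum}.
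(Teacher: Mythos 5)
Your proof is correct, and it organizes the computation in a genuinely different way from the paper's. Both arguments rest on the same two ingredients---the one-step recursion $p_m = f(m-1)\,p_{m-1}/(\lambda^* + f(m))$ and the normalization $\sum_j f(j)\,p_j = \lambda^*$ from \eqref{eqn-lambda-star-sum}---but the paper proves $p_{>k} = f(k)\,p_k/\lambda^*$ by induction on $k$, anchoring the argument at $k=1$ via $p_{>1} = 1 - p_1$, which silently uses that $(p_k)$ has total mass one. Your telescoping sum instead anchors ``at infinity,'' closing the argument with the observation that $f(N)\,p_N \to 0$ because the series $\sum_m f(m)\,p_m$ converges to the finite value $\lambda^*$. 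The two boundary conditions are complementary: the paper's induction needs $\sum_j p_j = 1$ but no limiting argument, while yours needs the vanishing boundary term but never invokes that the $p_k$ sum to one, so given \eqref{eqn-lambda-star-sum} your version is marginally more self-contained. You were also right to single out the boundary term as the one point requiring care---omitting it would be the only genuine gap in a telescoping proof, and your one-line justification via the convergence of $\sum_m f(m)\,p_m$ settles it. At bottom, the two proofs are the same identity $\lambda^* p_m = f(m-1)\,p_{m-1} - f(m)\,p_m$ summed in opposite directions: the paper accumulates it upward step by step, you collapse it downward from the tail.
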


\begin{proof}
    Define the auxiliary quantity $q_k$ as the limiting preference towards degree $k$ to be 
    \[q_k = \frac{f(k) p_k }{ \sum_{i=1}^\infty f(i) p_i} = \frac{ f(k) p_k }{\lambda^*}. \]
    Note $q_k = r_k p_k $ and proving \eqref{eqn-lemma-limit-degree-dist-equality} is the same as proving $q_k = p_{>k}$.  

    %\ch{We use induction in $k$.}{} 
    {We use induction in $k$.}
    For $k=1$, we note $p_1 = \lambda^*/(f(1) + \lambda^*)$, $p_{>1} = 1 - p_1 = f(1)/\bigl(\lambda^* + f(1)\bigr) = f(1) p_1/\lambda^* = q_1$.  Assuming $p_{>k} = q_k$ holds, consider the case of $k+1$.  By \eqref{eqn-pk-limit}, we have $p_{k+1} = f(k) p_k/\bigl(\lambda^* + f({k+1})\bigr)$.
    \begin{align*}
        p_{>k+1} & = p_{>k} - p_{k+1} = q_k - p_{k+1}\\
        & = f(k) p_k \left( \frac{1}{ \lambda^*} - \frac{ 1}{ \lambda^* + f({k+1})} \right)\\
        & = \frac{f({k+1})}{\lambda^*} \frac{f(k) p_k}{ \lambda^* + f({k+1})}\\
        & = \frac{f({k+1}) p_{k+1}}{\lambda^*} = q_{k+1}.
    \end{align*}
    Then by mathematical induction, $p_{>k} = q_k$ holds for all $k \in \bbN_+$.
\end{proof}

\section{Simulation Studies}
In this section we present a numerical illustration of the behavior of the empirical estimator.
We run the simulation for the following \pa functions (after normalization such that $f(1) = 1$ for easy comparisons) 
\begin{gather*}
    f^{\sss (1)}(k) = (k+1/2)/(3/2), \\
    f^{\sss (2)}(k) = k^{2/3}, \\
    f^{\sss (3)}(k) = \sqrt[4]{k+2}/\sqrt[4]{3}.
\end{gather*}
We simulate 1,000 \pa networks of 10,000, 100,000 and 1,000,000 nodes for each of the three functions, so 9,000 networks in total.   
In each experiment, since the model is only identifiable up to scale, we apply the empirical estimation on some finite degrees
and then normalize the obtained estimation such that the preference on degree 1 is $1$ for the sake of easy comparisons. 
In other words, we study $\hat{r}_k(n)/\hat{r}_1(n)$---another rescaled estimate of $f(k)$---instead of $\hat{r}_k(n)$.
We summarize our simulation study in Figure~\ref{fig:ee-all}.  
%The three rows of the 9 panels refer to the \pa functions $f^{\sss (1)}$, $f^{\sss (2)}$, $f^{\sss (3)}$ (top to bottom), while the three columns refer to networks with 10,000, 100,000, 1,000,000 nodes.  
%In each of the nine panels the degree $k$ is on the horizontal axis, plotted from 1 to the maximal degree that occurred in all of the 1000 simulations.  
%The vertical axis gives the value of the \pa function and a boxplot of the 1,000 estimates computed from the simulated networks.   
%The ground truth for each $k$ is marked  as the red lines in each panel. 
% On each horizontal level, we fix the \pa function.  The first row is based on $f^{\sss (1)}$, the second $f^{\sss (2)}$ and the third $f^{\sss (3)}$.  On each vertical level, we fix the number of nodes in each replicate and run $1,000$ replicates for one simulation.  The ground truth is marked in the plots as the red line in the boxplots.  When making the plots, we determine the maximal degree in each subplot is the
% degree $k$ such that we have at least one node of degree $k$ in all $1,000$ replicates in each simulation. 
\begin{figure}[htp]
    \centering
    \includegraphics[width=\textwidth]{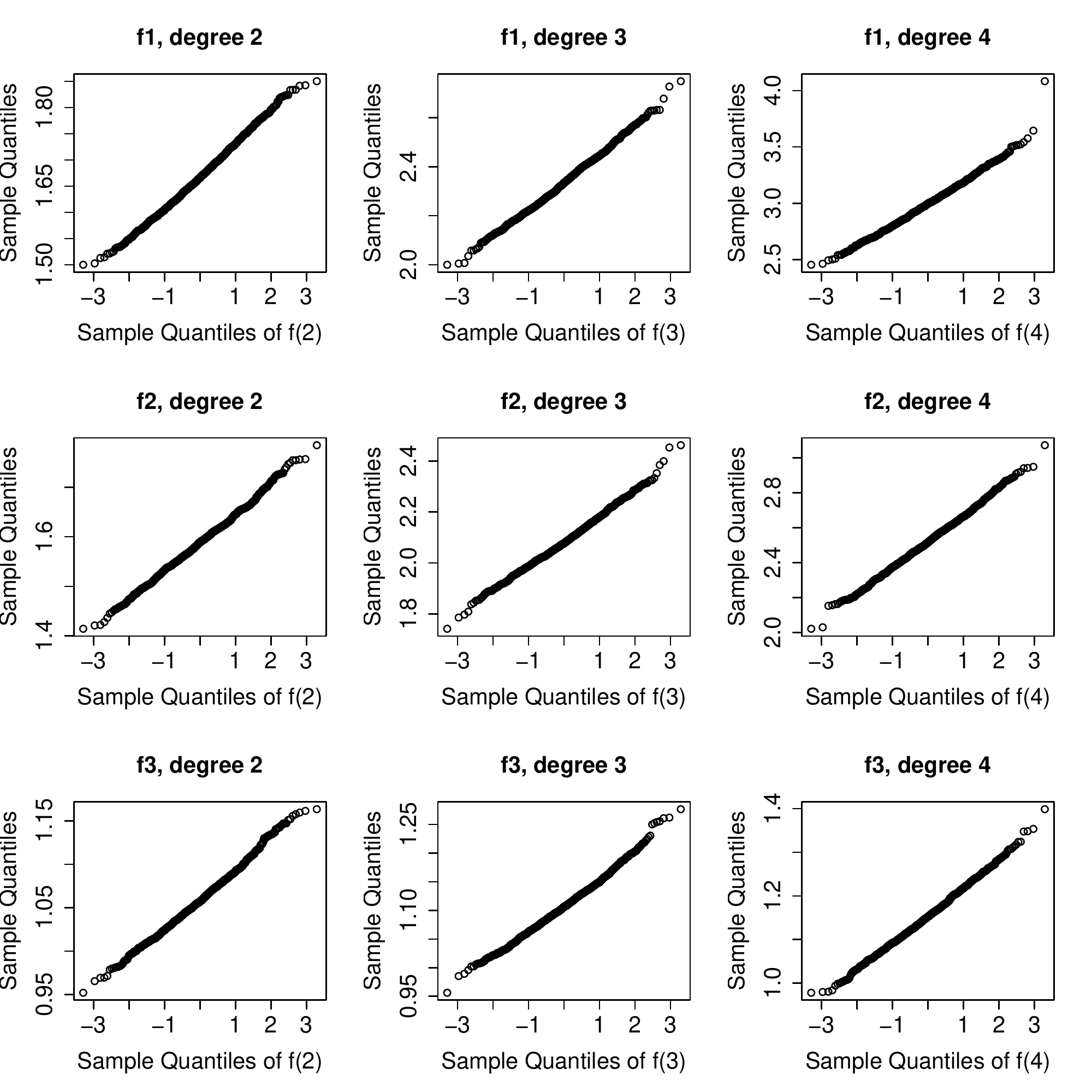}
    \caption{Boxplots of \ee's in different settings.
      The three rows correspond to the \pa functions $f^{\sss (1)}$, $f^{\sss (2)}$, $f^{\sss (3)}$, respectively and the three columns correspond to three network sizes, respectively.  
In each panel the horizontal axis is the degree and the vertical axis correspond to the value of the preference function and a boxplot of the 1,000 estimates.   
The ground truth is marked in red.}
    \label{fig:ee-all}
\end{figure}

These plots suggest the following observations:
\begin{itemize}
    \item[$\rhd$] The estimator is consistent, as our theorem shows.  The quality improves when we have more nodes, hence more observations.   
    \item[$\rhd$] For a fixed number of nodes, the quality of the estimator deteriorates fast when $k$ increases, exemplified by the substantial variability compared to the ground truth. 
    \item[$\rhd$] Even if the \ee has a large variance for large $k$'s, the sample median of $\hat{r}_k$ in each degree $k$ is still remarkably close to the truth. 
    \item[$\rhd$] For a fixed number of nodes it appears that, when the \ee makes larger errors, it is overestimating. 
    \item[$\rhd$] The \ee is not automatically monotone.  However, we can slightly modify the estimator so that it is still consistent but always gives monotone results (cf. \cite{chernozhukov2009improving}).
\end{itemize}
To summarize, the estimator performs as proven in our main result in Theorem~\ref{thm-consist-main}, but the exact performance depends on the true \pa function and the degrees of interest---if the true \pa function increases slowly with respect to the degree, then it is easier to estimate the preference of low degrees, and harder to estimate the preference of high degrees and vice versa.

\subsection{Sample variance study}
We again run 1000 simulations of trees with \pa functions of $f^{\sss (1)}$, $f^{\sss (2)}$, $f^{\sss (3)}$, but now only simulate networks of size 1,000,000.  
% Applying the \ee on each simulation and calculating the sample variance for each $k$ in every simulation give the summarizing plot in Figure~\ref{fig:sample-var}.  
We apply the \ee to each simulated network and calculate the sample variance of the 1000 estimates for each given degree up to $70$, 
if the estimate is defined.
%\rui{(if the estimate is defined)}.  
The sample variances are plotted against the degrees in Figure~\ref{fig:sample-var}.    Denote the sample variance of the \ee for degree $k$ by $s_k$. Inspection of these plots reveal the following:  
\begin{itemize}
    \item[$\rhd$] It appears that $\log s_k$ grows polynomially with respect to $\log k$.  For the affine \pa function $f^{\sss (1)}$, it looks like $\log s_k$ is an affine function of $\log k$.  
    \item[$\rhd$] The sample variance $s_k$ characterizes, to a certain extent, the difficulty of estimating $r_k$: 
        \begin{itemize}
            \item For small $k$'s, we see that $s^{\sss (3)}_k < s^{\sss (2)}_k < s^{\sss (1)}_k$.   
            \item Then at about $k=17$, the blue line of $s^{\sss (3)}_k$ first crosses the green line of $s^{\sss (2)}_k$, i.e., $s^{\sss (2)}_k < s^{\sss (3)}_k < s^{\sss (1)}_k$.  
            \item The blue line of $s^{\sss (3)}_k$ crosses the red line of $s^{\sss (1)}_k$ at around $k= 18$.  This means $s^{\sss (2)}_k < s^{\sss (1)}_k < s^{\sss(3)}_k$. 
            \item The green line of $s^{\sss (2)}_k$ crosses the red line of $s^{\sss (1)}_k$ at approximately $k = 35$, so from that point on $s^{\sss (1)}_k < s^{\sss (2)}_k < s^{\sss (3)}_k$. 
        \end{itemize} 
    \item[$\rhd$] On the one hand, for small $k$'s, the slower $f$ grows with $k$, the easier it is to estimate $r_k$. This is reflected in the observation that slower $f$ yield lower sample variance for small values of $k$.  On the other hand, for large $k$'s, the faster $f$ grows with $k$, the easier it is to estimate $r_k$. 
   \item[$\rhd$] The shapes of curves of $k\mapsto s_k$ for different $f$'s seems to indicate that the faster $f$ grows with $k$, the slower $\log s_k$ grows with $\log k$.   The seemingly affine relations might be a consequence of the limiting power-law distribution, but it is unclear to us how these relate precisely.    
\end{itemize}
The above observations seem intuitive because for $f$ that grows fast, there are more nodes of high degrees, and this can be expected to yield better results in estimating the preferences of higher degrees.  However, as the total number of nodes is fixed, more nodes of high degrees mean fewer nodes of low degrees.  This results in larger variances in estimating $r_k$ for small $k$'s.  
\begin{figure}[htp]
    \centering
    \includegraphics[width=0.7\textwidth]{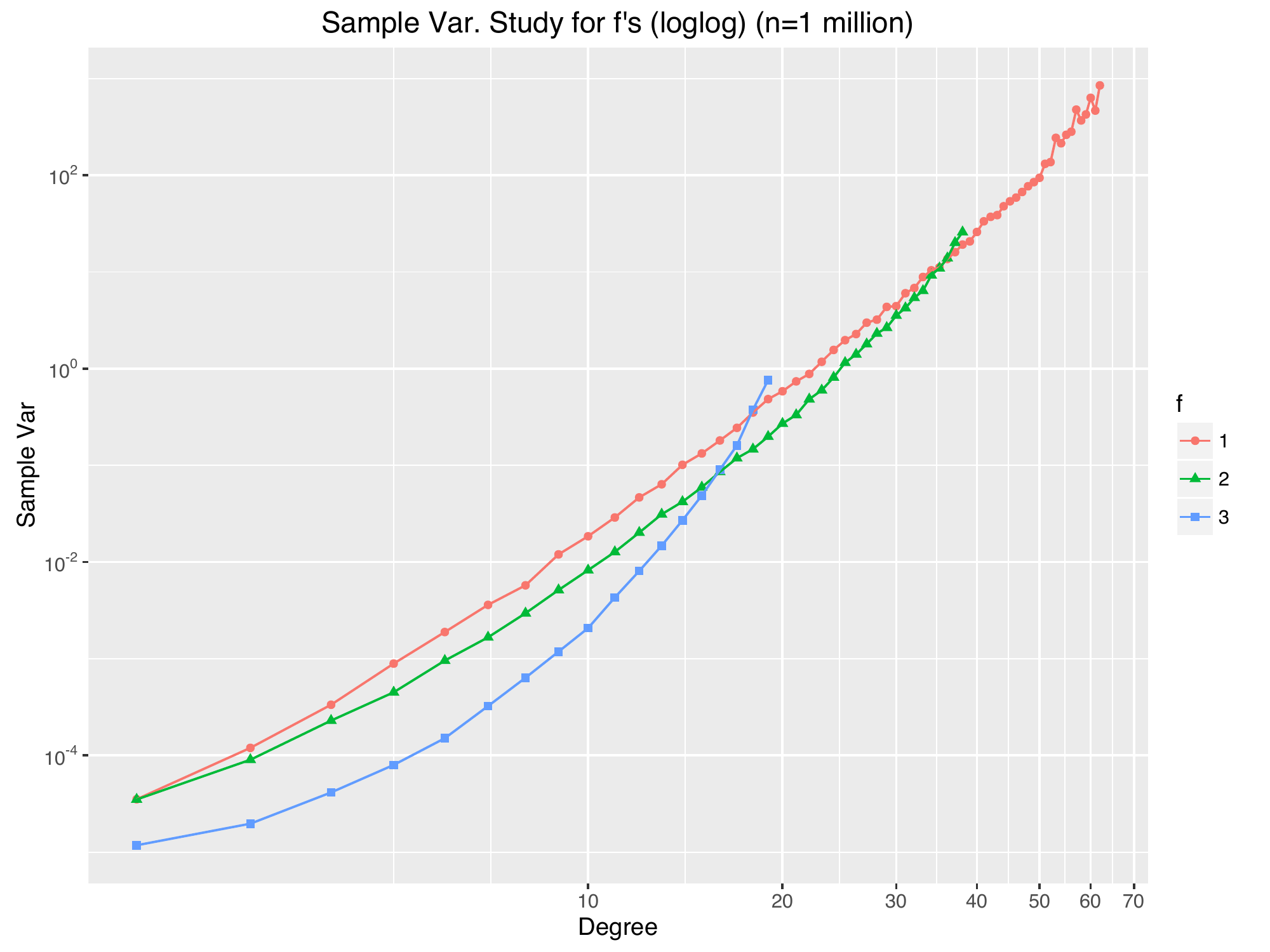}
    \caption{Sample Variance Study of EE.
    Different colors stand for different \pa functions: the red corresponds to $f^{\sss (1)}$, the green $f^{\sss (2)}$ and the blue $f^{\sss (3)}$.  The scale on both axes are in $\log_{10}$.  %As there are fewer nodes of high degrees when $f$ grows slower, the three lines do not reach the same degrees.
    The three lines do not reach the same degrees as there are fewer nodes of high degrees when $f(k)$ grows slower in $k$.}
    \label{fig:sample-var}
\end{figure}

\subsection[Asymptotic normality?]{Asymptotic normality of the estimator with the parametric rate?}
We may wonder what is the asymptotic distribution of $\hat{r}_k(n)$ is, for any fixed $k$ and after proper rescaling, when $n \rightarrow \infty$.   To answer this question, we discuss some simulation results here.  %We will study mainly $\hat{r}_2$ of the three \pa functions used before.  

We fix the number of nodes to be 1,000,000 in all simulated networks for each \pa function.  Then we look at the \ee's in each simulation.  For each $f$, we plot the \qq-plot of each estimator for $k=2,3,4$ against the normal distribution.  The results are summarized in Figure~\ref{fig:qqplot}.   Since the number of nodes is one million, we expect that the limiting distribution should have kicked in, assuming there is indeed a limiting distribution.  The \qq-plots indeed indicate that the \ee's have asymptotic normal distributions.  
\begin{figure}[htp]
    \centering
    \includegraphics[width=0.95\textwidth]{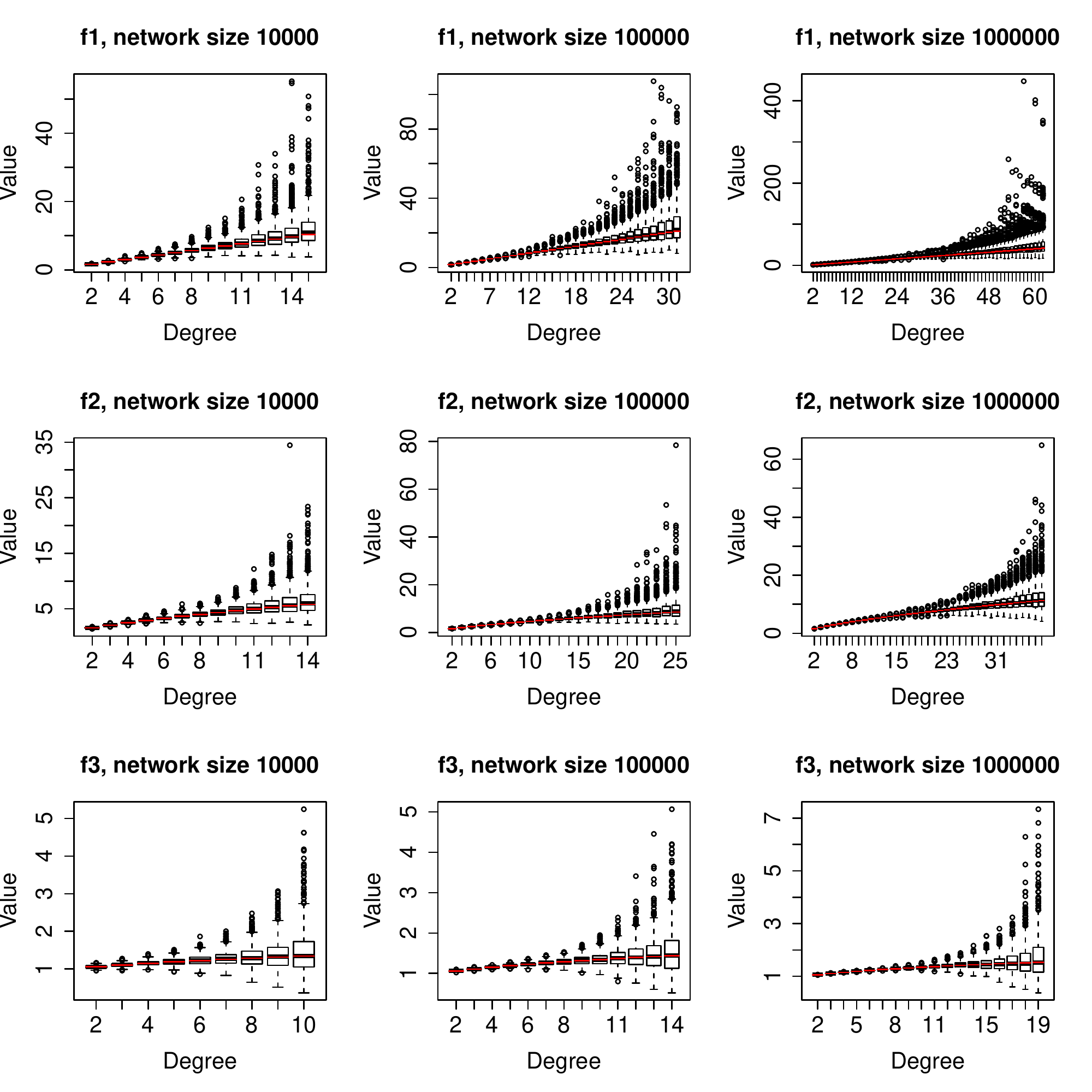}
    \caption[QQ-Plots of Empirical Estimators]{QQ-Plots of $\hat{r}_2(n)$, $\hat{r}_3(n)$ and $\hat{r}_4(n)$ with $n = 10^6$ for $f^{\sss (1)}$, $f^{\sss (2)}$ and $f^{\sss (3)}$
    The rows correspond to the \pa functions $f^{\sss (1)}$, $f^{\sss (2)}$ and $f^{\sss (3)}$, respectively. 
The columns correspond to the degree $k=2,3,4$, on which we conduct our \ee study, respectively.}
    \label{fig:qqplot}
\end{figure}

We conjecture that, for %the following holds for the limiting distribution for estimating the preference on 
fixed $k$,
\begin{equation}
    \label{eqn-rk-sus}
    \sqrt{n} (\hat{r}_k(n)-r_k)\xrightarrow{d} N(0, \sigma_k^2),
\end{equation}
where $\sigma_k^2$ only depend on $f$ and $k$ and $\xrightarrow{d}$ denotes convergence in distribution.  To validate this conjecture, we perform the following simulation study.  We fix the \pa function to be $f^{\sss (2)}$ and run 1,000 simulations for each of the three different network sizes 10,000, 100,000 and 1,000,000 and study the estimators of the preference on degree 2 only.
If \eqref{eqn-rk-sus} is true, then the distribution of $\hat{r}_2(n)$ should stabilize after rescaling them with $\sqrt{n}$.  We summarize the results in Figure~\ref{fig:r2-compar}.

\begin{figure}[htp]
    \centering
    \includegraphics[width=.9\textwidth]{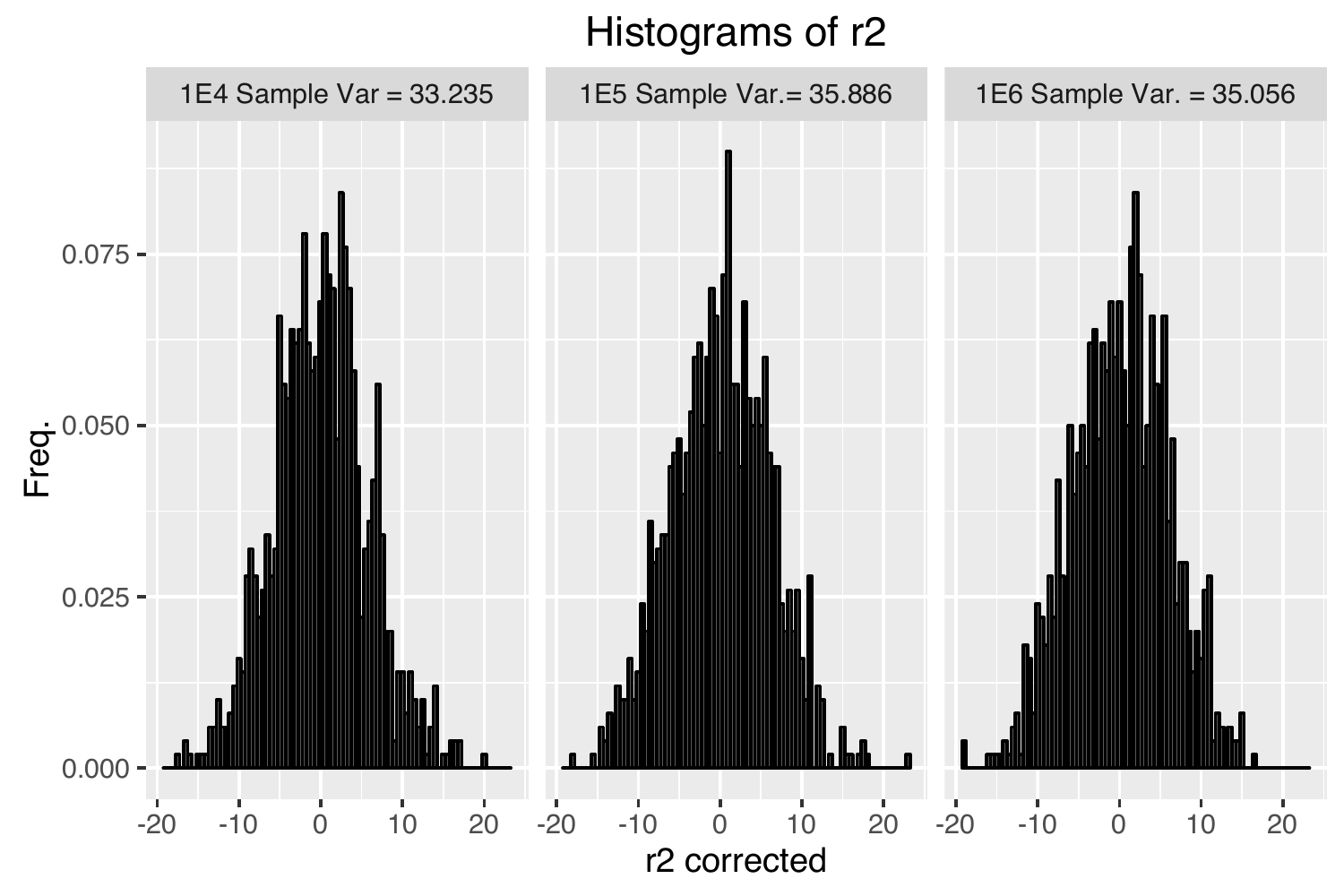}
    \caption[Histogram of Rescaled Empirical Estimator]{Histograms of $\sqrt{n}(\hat{r}_2(n)-r_2)$ for $f^{\sss (2)}$ with different network sizes
    The label \textsf{r2 corrected} on the $x$-axis indicates that the histograms show $\sqrt{n} (\hat{r}_2(n) - r_2)$ (centered and rescaled with the parametric rate) instead of $\hat{r}_2(n)$. The sample variances of each simulation is on top of each subplot. The number of bins is 50.}
    \label{fig:r2-compar}
\end{figure}
The density estimations on the data of $\sqrt{n}(\hat{r}_2(n)-r_2)$ for the three different values of $n$ are displayed in Figure~\ref{fig:r2-density}.  The fact that both the sample variances, histograms and density plots look rather stable after the $\sqrt{n}$-rescaling provides further evidence towards the conjecture in \eqref{eqn-rk-sus}.  

%\textbf{Acknowledgements}
\begin{figure}[htp]
    \centering
    \includegraphics[width=0.7\textwidth]{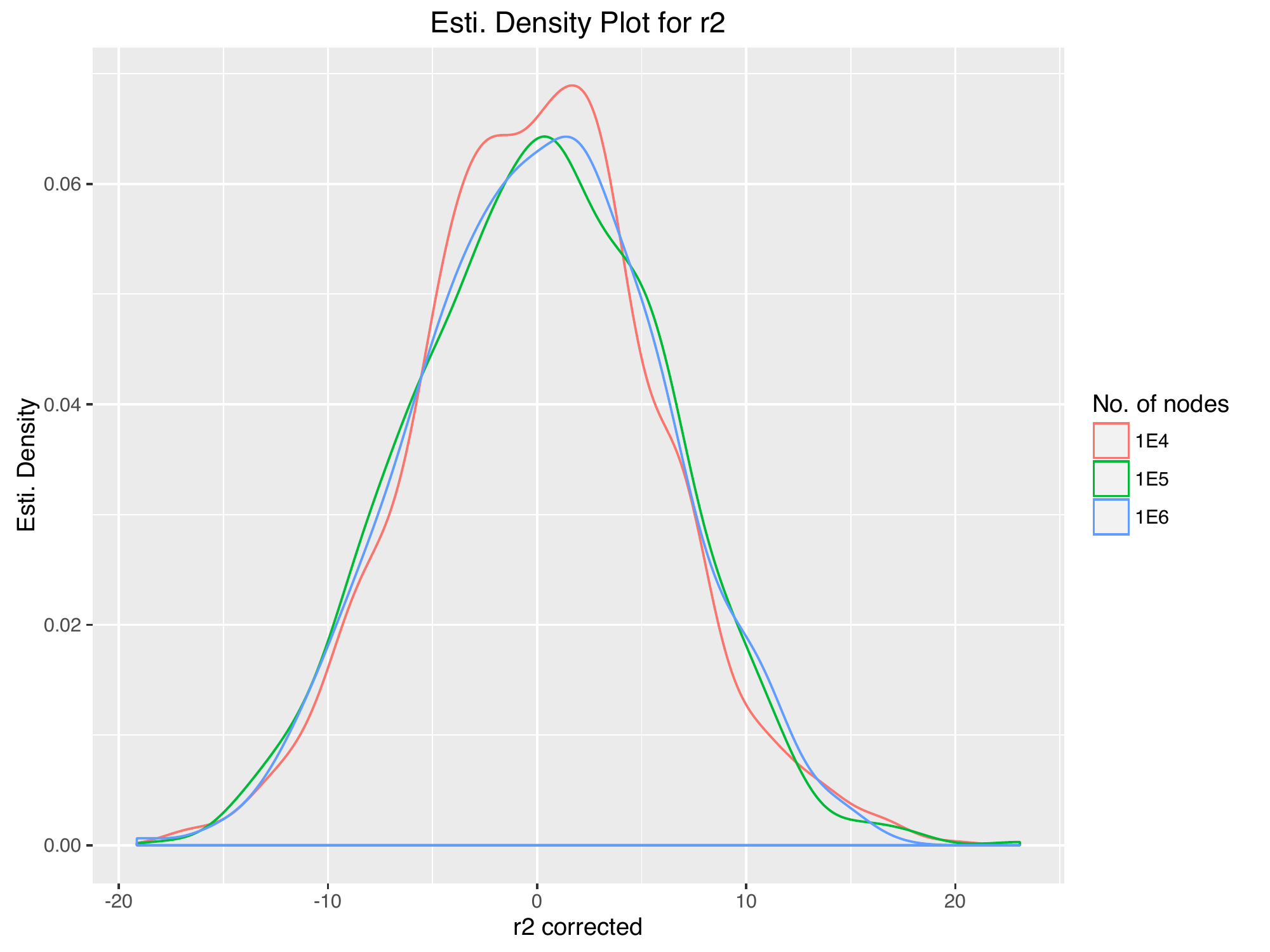}
    \caption{Estimated Density of $\sqrt{n}(\hat{r}_2(n)-r_2)$ with different network sizes $n$}
    \label{fig:r2-density}
\end{figure}

%\ch{
\subsection{Discussion and open problems}
\label{sec-disc-op}
In this paper, we have proposed an empirical estimator for the preferential attachment function in the setting of preferential attachment trees with (sub)linear preferential attachment functions. We rely on an embedding result that views the preferential attachment model as a continuous-time branching process observed at the moments where the branching process has a fixed size. We now discuss some open problems.

\paragraph{Beyond the tree setting.} It would be of interest to extend our analysis to settings where nodes enter the network with more than one edge. This corresponds to general preferential attachment graphs. In this case, the embedding results no longer hold, making the analysis substantially harder. 

\paragraph{The limits of consistency.} 
Our main result in Theorem~\ref{thm-consist-main} shows that the \ee is consistent for every ffixed $k$ fixed. 
%This suggests that consistency also holds for some $k_n\rightarrow \infty$. 
For which $k=k_n\ra\infty$ does consistency hold, and how does this depend on the \pa function $f$?
For which norms on $f$ does consistency hold?

\paragraph{Asymptotic normality.} Figures \ref{fig:qqplot}, \ref{fig:r2-compar}  and \ref{fig:r2-density} 
suggest asymptotic normality of  the \ee for fixed values of $k$ at the rate $\sqrt n$. 
How can such a statement be proved? 
It will also be interesting to study the convergence for increasing values of $k$. 
We expect the variance $\sigma_k^2$ in \eqref{eqn-rk-sus} to increase with $k$. This raises a problem 
of bias-variance trade-off when estimating the \pa function for large degrees, much as in ordinary nonparametric
estimation.
%For which values of $k_n\rightarrow \infty$ does asymptotic normality remain to be true?
%}
%\bibliographystyle{abbrv}
\bibliographystyle{abbrvnat}
\bibliography{pam}

\end{document}